\documentclass[reqno]{amsart}

\usepackage{amssymb,amsmath,amsthm,latexsym,booktabs,amscd,extarrows,tikz,mathrsfs,tikz-cd}
\usepackage[colorlinks=true,citecolor=red,linkcolor=blue]{hyperref}

\numberwithin{equation}{section}

\theoremstyle{definition}
\newtheorem{definition}{Definition}[section]
\newtheorem{remark}[definition]{Remark}

\theoremstyle{plain}
\newtheorem{lemma}[definition]{Lemma}

\newtheorem{corollary}[definition]{Corollary}

\newtheorem{proposition}[definition]{Proposition}

\begin{document}

\title{Embedding of pseudotensor category}

\author{Rui Yao; Zhixiang Wu}

\address{School of Mathematical Sciences, Zhejiang University, Hangzhou, P.R.China}

\email{yaorui@zju.edu.cn, wzx@zju.edu.cn}

\begin{abstract}
We realize the embedding functor from pseudotensor category to tensor category in a purely algebraic setting when the pseudotensor category is the category $\mathcal{M}(H)$ of left $H$-modules,
which is originally defined by Beilinson and Drinfeld. Then we use operadic methods to construct the Schur functor and free object in the tensor category.
\end{abstract}

\maketitle


\tableofcontents

\section{Introduction}
The notion of vertex algebra is a rigorous mathematical definition of the chiral part of a 2-dimensional quantum field theory, which was formulated by R.Borcherds in \cite{Borcherds}.
Conformal algebras provide an axiomatic description of the "singular" part of the vertex algebras and regard the vertex algebra as a high-weight module \cite{Kac}.
Using the language of pseudotensor categories, a natural generalization of conformal algebras was introduced in \cite{BDK}, which is called pseudoalgebras.
In \cite{BDK}, a full classification of semisimple finite Lie pseudoalgebras was obtained.
Classification of all finite irreducible representations of simple finite Lie $H$-pseudoalgebras has been completed in \cite{BDK1,BDK2,BDK3}.
Unital associative, Leibniz, graded left-symmetric, Jordan, Zinbiel pseudoalgebras were studied in \cite{RA,Wu1,Wu2,Kolesnikov,YR}.

An algebra of a certain type is usually defined by generating operations and relations. The notion of operad is an algebraic device which encodes a type of algebras.
Instead of studying the properties of a particular algebra, we focus on the universal operations that can be performed on the elements of any algebra of a given type.
The information contained in an operad consists in these operations and all the ways of composing them.
The operadic point of view has several advantages, such as applying many results known for classical types of algebras to other types of algebras and simplifying both the statements and the proofs.
The operad theory also plays an important role in the development of deformation theory and quantum field theory.
The original definition is due to Peter May, and was given in \cite{M}, where an operad is regarded as a collection of operations equipped with a notion of composition and subject to various conditions.
In the encyclopedic book \cite{LV} written by J.L.Loday and B.Vallette, it provides a comprehensive account of algebraic operad theory, containing introduction to algebraic operads,
a conceptual treatment of Koszul duality and applications to homotopical algebra.

The use of operadic viewpoint to study pseudoalgebra quickly emerged after the concept of pseudoalgebra was proposed. We can construct the following one-to-one correspondence
\[
    \mathscr{P}\ H\text{-pseudoalgebra structure on}\ M\leftrightarrow \text{Hom}_\text{Op}(\mathscr{P},\text{End}_M)
\]
where $H$ is a Hopf algebra, $M$ is a $H$-module, $\mathscr{P}$ is a algebraic operad and
$\text{End}_M$ has some modifications defined by $\text{End}_M(n)=\text{Hom}_{H^{\otimes n}}(M^{\boxtimes n},H^{\otimes n}\otimes_H M)$ for $n\geq0$.
However, the further research on the operadic morphism from $\mathscr{P}$ to $\text{End}_M$ seems to be difficult.
We have no specific methods to explain clearly how this operadic morphism is mapped.
The main obstruction is that we are not yet able to construct the free $\mathscr{P}\ H$-pseudoalgebra structure on a fixed $H$-module.
Hence, the various kinds of tools in the operad theory can not be applied directly to the study of pseudoalgebra.

When considering chiral algebra(vertex algebra) and $\text{Lie}^*$ algebra(Lie conformal algebra), there are many works arising by using operadic methods.
The pioneering work is dual to A. Beilinson and V. Drinfeld in their book \cite{BD}, where they give chiral algebra and chiral homology in a taste of algebraic geometry
via pseudotensor category and $\mathcal{D}$-module language. The chiral Koszul duality between chiral algebra and factorization algebras are studied in \cite{FG}.
In \cite{Ba,Ba2}, the authors translate the construction of the chiral opeard to the purely algebraic language of vertex algebras
and give rise to vertex algebra cohomology. The article \cite{NY} study the algebraic operad of SUSY vertex algebra.
There is also research on quantum field theory by colored operad; see \cite{BSW} and the references therein.

In the book \cite{BD}, it is proved that any pseudotensor category $\mathcal{M}$ can be realized as a full pseudotensor subcategory
of a tensor(=symmetric monoidal) category $\mathcal{M}^\otimes$. Then they give a specific construction of $\mathcal{M}^\otimes$, denoted by $\mathcal{M}(X^\mathcal{S})$,
when pseudotensor category $\mathcal{M}=\mathcal{M}(X)$ is the category of right $\mathcal{D}$-modules on a smooth curve $X$.

In this paper, we realize the above embedding functor and the tensor category in a purely algebraic setting when the pseudotensor category is the category $\mathcal{M}(H)$ of left $H$-modules.
We construct two tensor categories, which are called the category of $H^\infty$-modules and the category of interconnected $H^\infty$-modules,
denoted by $H^\infty\text{-Mod}$ and $H^\infty\text{-Mod}_{\text{IC}}$.
There are two paths embedding the pseudotensor category $\mathcal{M}(H)$ to the tensor categories.
The first one is the functor $\iota:\mathcal{M}(H)\hookrightarrow H^\infty\text{-Mod}\hookrightarrow H^\infty\text{-Mod}_{\text{IC}},V\mapsto V^\infty\mapsto(V^\infty,\delta)$
and the second one is the fully faithful functor $i:\mathcal{M}(H)\hookrightarrow H^\infty\text{-Mod}_{\text{IC}},V\mapsto(V^\infty,\text{id})$.
We show that our $H^\infty\text{-Mod}_{\text{IC}}$ is actually a generalization of $\mathcal{M}(X^\mathcal{S})$; see details in remark \ref{rem3}.

After the construction of the tensor category, the ground has been prepared for the algebraic operad.
In these two tensor categories, we can naturally consider many construction by operadic methods, such as (co)homology, (co)bar construction and Koszul duality.

In this setting, we are also hopeful to think about chiral algebra and factorization algebra structure over a higher dimensional scheme
and give chiral homology structure in a purely algebraic language, which may be easier for computation. We leave these for future work.

This paper is organized as follows. In Section 2, we recall some Hopf algebra theory, definition of pseudoalgebra and some operad theory.
In Section 3, we give the definition of $H^\infty$-modules and $H^\infty$-algebras, and construct two tensor products on the category of $H^\infty$-modules.
We also compare the relationship between $H$-pseudoalgebras and $H^\infty$-algebras.
In Section 4, another tensor category consisting of interconnected $H^\infty$-modules is defined and
we give rise to two functors embedding the pseudotensor category into the tensor category.
We also compare our construction with the tensor category $\mathcal{M}(X^\mathcal{S})$ given in \S 3.4.10 of \cite{BD}.
In Section 5, we use operadic methods to give the Schur functor and free object in the tensor category $H^\infty\text{-Mod}_\text{IC}$.

\section{Preliminaries}
\subsection{Hopf algebras}
In this subsection, we introduce some Hopf algebra theory that will be used later.
Let $H$ be a Hopf algebra with a coproduct $\Delta$, a counit $\varepsilon$, and an antipode $S$.
We will use the following notation:
\[\Delta (h)=h_{(1)}\otimes h_{(2)},\]
\[(\Delta\otimes id)\Delta(h)=(id\otimes\Delta)\Delta(h)=h_{(1)}\otimes h_{(2)}\otimes h_{(3)},\]
\[(S\otimes id)\Delta(h)=h_{(-1)}\otimes h_{(2)}.\]
The axioms of the antipode and the counit can be written as follows:
\[h_{(-1)}h_{(2)}=h_{(1)}h_{(-2)}=\varepsilon (h),\]
\[\varepsilon (h_{(1)})h_{(2)}=h_{(1)}\varepsilon (h_{(2)})=h.\]
There is a natural structure of $H$-module on $H^{\otimes n}$ given by
\begin{equation}\label{fo1}
(f_1\otimes\dots\otimes f_n)h=(f_1h_{(1)})\otimes\dots\otimes (f_nh_{(n)}),\quad f_i,h\in H.
\end{equation}
For any arbitrary Hopf algebra $H$, there is a map $\mathcal{F}:H\otimes H\rightarrow H\otimes H$, called the \textbf{Fourier transform}, by the formula
\[
    \mathcal{F}(f\otimes g)=(f\otimes 1)(S\otimes id)\Delta (g)=fg_{(-1)}\otimes g_{(2)}.
\]
It follows from the formula $h_{(-1)}h_{(2)}\otimes h_{(3)}=1\otimes h=h_{(1)}h_{(-2)}\otimes h_{(3)}$ that $\mathcal{F}$ is a vector space isomorphism with an inverse given by
\[
    \mathcal{F}^{-1}(f\otimes g)=(f\otimes 1)\Delta(g)=fg_{(1)}\otimes g_{(2)}.
\]
Using Fourier transform $\mathcal{F}$, for any $f\otimes g \in H\otimes H$, we have
\[f\otimes g =(fg_{(-1)}\otimes1)\Delta(g_{(2)}).\]

In this paper, we mainly consider cocommutative Hopf algebras, i.e., $h_{(1)}\otimes h_{(2)}=h_{(2)}\otimes h_{(1)}$ for all $h\in H$.
Suppose $H$ is a cocommutative Hopf algebra, and let $X=H^*$ be its dual algebra. We can fix a linear basis $\{h_i|i\in I\}$ of $H$ and denote its dual basis by $\{x_i|i\in I\}$,
i.e., $\langle x_i,h_j\rangle =\delta _{ij},i,j\in I$. There is an action of $H$ on $X$ given by
\[\langle xf,g\rangle =\langle x,gS(f)\rangle ,\langle fx,g\rangle =\langle x,S(f)g\rangle ,\]
for $x\in X, f,g\in H$. Moreover, there is a linear transformation $S:X\rightarrow X$ defined by $\langle S(x),f\rangle =\langle x,S(f)\rangle $ for $x\in X,f \in H$.

\subsection{\texorpdfstring{$H$-pseudoalgebras}{H-pseudoalgebras}}
Let $H$ be a cocommutative Hopf algebra over a field ${\bf k}$. Suppose that $A$ is a left $H$-module.
A pseudoproduct is an $H$-bilinear map $*:A\otimes A\rightarrow (H\otimes H)\otimes_H A$. An $H$-module $A$ endowed with a pseudoproduct $*$ is called an $H$-pseudoalgebra.
For every $n,m\geq 1$, an $H$-bilinear map $*$ can be naturally expanded to a map from $(H^{\otimes n}\otimes_H A)\otimes (H^{\otimes m}\otimes_H A)$ to $H^{\otimes (m+n)}\otimes_H A$:
\[
    (F\otimes_H a)*(G\otimes_H b)=((F\otimes G)\otimes_H 1)((\Delta^{n-1}\otimes \Delta^{m-1})\otimes_H id_Z)(a*b),
\]
where $F\in H^{\otimes n},G\in H^{\otimes m},a,b\in Z,\Delta^{n}=(\Delta\otimes id^{n-1})(\Delta\otimes id^{n-2})\dots (\Delta\otimes id)\Delta$.

\subsection{Operad}
By definition an $\mathbb{S}$-module over ${\bf k}$ is a family
\[
    M=(M(0),M(1),\dots,M(n),\dots),
\]
of right ${\bf k}[\mathbb{S}_n]$-modules $M(n)$. For $\mu\in M(n)$ the integer $n$ is called the arity of $\mu$.
A morphism of $\mathbb{S}$-modules $f:M\rightarrow N$ is a family of $\mathbb{S}_n$-equivariant maps $f_n:M(n)\rightarrow N(n)$.
To any $\mathbb{S}$-module $M$ we associate its Schur functor $\widetilde{M}:Vect\rightarrow Vect$ defined by 
\[
    \widetilde{M}(V):=\bigoplus_{n\geq 0}M(n)\otimes_{\mathbb{S}_n}V^{\otimes n}.
\]
From the direct sum, the tensor product and the composition of Schur functors, we can get the direct sum, the tensor product and the composition of $\mathbb{S}$-modules.
\[
    (M\oplus N)(n):=M(n)\oplus N(n),
\]
\[
    (M\otimes N)(n):=\bigoplus_{i+j=n}\text{Ind}^{\mathbb{S}_n}_{\mathbb{S}_i\times \mathbb{S}_j}M(i)\otimes N(j)\simeq \bigoplus_{i+j=n} M(i)\otimes N(j)\otimes {\bf k}[Sh(i,j)],
\]
\[
    (M\circ N)(n):=\bigoplus_{k\geq0} M(k)\otimes_{\mathbb{S}_k}(\bigoplus_{i_1+\dots+i_k=n} \text{Ind}^{\mathbb{S}_n}_{\mathbb{S}_{i_1}\times\dots\times \mathbb{S}_{i_k}}N(i_1)\otimes\dots\otimes N(i_k)),
\]
where Ind means induced representation, $Sh(i,j)$ means $(i,j)$-shuffles of $\mathbb{S}_n$.
\begin{definition}
    (\cite{LV}, \S 5.2.1)
    A \textbf{symmetric operad} $\mathscr{P}=(\mathscr{P},\gamma,\eta)$ is an $\mathbb{S}$-module $\mathscr{P}=\{\mathscr{P}(n)\}_{n\geq0}$
    endowed with morphisms of $\mathbb{S}$-modules
    \[
        \gamma:\mathscr{P}\circ \mathscr{P}\rightarrow \mathscr{P}
    \]
    called \textbf{composition map}, and
    \[
        \eta:I\rightarrow \mathscr{P}
    \]
    called \textbf{unit map}, which make $\mathscr{P}$ into a monoid.
    Explicitly, the morphisms $\gamma$ and $\eta$ satisfy associativity axiom:
    \begin{center}
        \begin{tikzpicture}
            \node(A) at (0,1.5) {$\mathscr{P}\circ(\mathscr{P}\circ\mathscr{P})$};
            \node(B) at (-3,0) {$(\mathscr{P}\circ\mathscr{P})\circ\mathscr{P}$};
            \node(C) at (-3,-2) {$\mathscr{P}\circ\mathscr{P}$};
            \node(D) at (4,-2) {$\mathscr{P}$};
            \node(E) at (4,1.5) {$\mathscr{P}\circ\mathscr{P}$};
            \draw[->] (B) -- node[above]  {$\cong$} (A);
            \draw[->] (B) -- node[left]  {$\gamma\circ Id$} (C);
            \draw[->] (C) -- node[above]  {$\gamma$} (D);
            \draw[->] (A) -- node[above]  {$Id\circ\gamma$} (E);
            \draw[->] (E) -- node[right]  {$\gamma$} (D);
        \end{tikzpicture}
    \end{center}
    and unitality axiom:
    \begin{center}
        \begin{tikzpicture}
            \node(A) at (0,0) {$I\circ\mathscr{P}$};
            \node(B) at (3,0) {$\mathscr{P}\circ\mathscr{P}$};
            \node(C) at (3,-1.5) {$\mathscr{P}$};
            \node(D) at (6,0) {$\mathscr{P}\circ I$};
            \draw[->] (A) -- node[above]  {$\eta\circ Id$} (B);
            \draw[->] (B) -- node[right]  {$\gamma$} (C);
            \draw[->] (A) -- node[below]  {=} (C);
            \draw[->] (D) -- node[above]  {$Id\circ\eta$} (B);
            \draw[->] (D) -- node[below]  {$=$} (C);
        \end{tikzpicture}
    \end{center}
\end{definition}
\begin{definition}
    (\cite{LV}, \S 5.2.3)
    An \textbf{algebra over the operad} $\mathscr{P}$, or a $\mathscr{P}$-\textbf{algebra} for short, is a vector space $A$ equipped with a linear map $\gamma_A:\mathscr{P}(A)\rightarrow A$
    such that the following diagrams commute:
    \begin{center}
    \begin{tikzpicture}
        \node(A) at (0,1.5) {$\mathscr{P}(\mathscr{P}(A))$};
        \node(B) at (-2,0) {$(\mathscr{P}\circ\mathscr{P})(A)$};
        \node(C) at (-2,-1.5) {$\mathscr{P}(A)$};
        \node(D) at (3,-1.5) {$A$};
        \node(E) at (3,1.5) {$\mathscr{P}(A)$};
        \draw[->] (B) -- node[above]  {=} (A);
        \draw[->] (B) -- node[left]  {$\gamma(A)$} (C);
        \draw[->] (C) -- node[above]  {$\gamma_A$} (D);
        \draw[->] (A) -- node[above]  {$\mathscr{P}(\gamma_A)$} (E);
        \draw[->] (E) -- node[right]  {$\gamma_A$} (D);
    \end{tikzpicture}
    \quad
    \begin{tikzpicture}
        \node(A) at (0,0) {I(A)};
        \node(B) at (2,0) {$\mathscr{P}(A)$};
        \node(C) at (2,-1.5) {A};
        \draw[->] (A) -- node[above]  {$\eta(A)$} (B);
        \draw[->] (B) -- node[right]  {$\gamma_A$} (C);
        \draw[->] (A) -- node[below]  {=} (C);
    \end{tikzpicture}
\end{center}
\end{definition}
There is an important kind of operad called \textbf{endomorphism operad}. For any vector space $V$ the endomorphism operad $\text{End}_V$ is given by
\[
    \text{End}_V(n):=\text{Hom}(V^{\otimes n},V),
\]
where, by convention, $V^{\otimes 0}={\bf k}$. The right action of $\mathbb{S}_n$ on $\text{End}_V$ is induced by the left action on $V^{\otimes n}$.
The composition map $\gamma$ is given by composition of endomorphisms:
\begin{center}
    \begin{tikzpicture}
        \node(A) at (0,0) {$V^{\otimes i_1}\otimes$};
        \node(B) at (2,0) {$\dots$};
        \node(C) at (4,0) {$\otimes V^{\otimes i_k}$};
        \node(D) at (6,0) {$=$};
        \node(E) at (8,0) {$V^{\otimes n}$};
        \node(A1) at (0,-1.5) {$V\otimes$};
        \node(B1) at (2,-1.5) {$\dots$};
        \node(C1) at (4,-1.5) {$\otimes V$};
        \node(D1) at (6,-1.5) {$=$};
        \node(E1) at (8,-1.5) {$V^{\otimes k}$};
        \node(B2) at (2,-3) {$V$};
        \node(D2) at (6,-3) {$=$};
        \node(E2) at (8,-3) {$V$};
        \draw[->] (A) -- node[right]  {$f_1$} (A1);
        \draw[->] (C) -- node[right]  {$f_k$} (C1);
        \draw[->] (E) -- node[right]  {$f_1\otimes\dots\otimes f_k$} (E1);
        \draw[->] (B1) -- node[right]  {$f$} (B2);
        \draw[->] (E1) -- node[right]  {$f$} (E2);
    \end{tikzpicture}
\end{center}
\[
    \gamma(f;f_1,\dots,f_k):=f(f_1\otimes\dots\otimes f_k).
\]
It is immediate to verify that $\text{End}_V$ is an algebraic operad.
\begin{proposition}
    (\cite{LV}, Prop 5.2.2)
    A $\mathscr{P}$-algebra structure on the vector space $A$ is equivalent to a morphism of operads $\mathscr{P}\rightarrow End_A$.
\end{proposition}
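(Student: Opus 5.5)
The plan is to build the two directions of the correspondence explicitly and check that they are mutually inverse, using the adjunction built into the Schur functor. First I will unpack what the structure map $\gamma_A:\mathscr{P}(A)\to A$ amounts to. Since
\[
\mathscr{P}(A)=\bigoplus_{n\ge 0}\mathscr{P}(n)\otimes_{\mathbb{S}_n}A^{\otimes n},
\]
a linear map out of $\mathscr{P}(A)$ is the same as a family of $\mathbb{S}_n$-balanced linear maps $\mathscr{P}(n)\otimes A^{\otimes n}\to A$. By tensor--hom adjunction, such a family is the same as a family of linear maps $\Gamma_n:\mathscr{P}(n)\to\text{Hom}(A^{\otimes n},A)=\text{End}_A(n)$. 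The balancing condition $\mu\sigma\otimes v=\mu\otimes\sigma v$ translates exactly into $\mathbb{S}_n$-equivariance of $\Gamma_n$, because the right action on $\text{End}_A(n)$ is induced by the left action on $A^{\otimes n}$. So a linear map $\gamma_A$ is the same as a morphism of $\mathbb{S}$-modules $\Gamma:\mathscr{P}\to\text{End}_A$, given by $\Gamma(\mu)(a_1\otimes\dots\otimes a_n)=\gamma_A(\mu\otimes a_1\otimes\dots\otimes a_n)$.

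Next I will match the axioms. For the unit, $I(A)=A$ sits in arity $1$, and the unit diagram for $\gamma_A$ says $\Gamma(\eta(1))=\text{id}_A$, which is exactly the unit of $\text{End}_A$. For associativity, I will evaluate both paths of the associativity square on an element
\[
\mu\otimes(\mu_1\otimes \bar a_1)\otimes\dots\otimes(\mu_k\otimes\bar a_k)\in\mathscr{P}(\mathscr{P}(A)),
\]
where $\bar a_j\in A^{\otimes i_j}$.
\begin{itemize}
\item The path through $\mathscr{P}(\gamma_A)$ followed by $\gamma_A$ gives $\Gamma(\mu)\bigl(\Gamma(\mu_1)(\bar a_1)\otimes\dots\otimes\Gamma(\mu_k)(\bar a_k)\bigr)$, which is the composition $\gamma_{\text{End}_A}(\Gamma(\mu);\Gamma(\mu_1),\dots,\Gamma(\mu_k))$ applied to $\bar a_1\otimes\dots\otimes\bar a_k$.
\item The path through $\gamma(A)$ gives $\Gamma(\gamma(\mu;\mu_1,\dots,\mu_k))(\bar a_1\otimes\dots\otimes\bar a_k)$.
\end{itemize}
Since the elements $\bar a_1\otimes\dots\otimes\bar a_k$ span $A^{\otimes n}$, commutativity of the square is equivalent to $\Gamma$ commuting with the composition maps, i.e. $\Gamma\circ\gamma=\gamma_{\text{End}_A}\circ(\Gamma\circ\Gamma)$. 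The two assignments $\gamma_A\mapsto\Gamma$ and $\Gamma\mapsto\gamma_A$ are inverse by the adjunction, which finishes the argument.

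The main obstacle is the associativity step. One must use the identification $(\mathscr{P}\circ\mathscr{P})(A)\cong\mathscr{P}(\mathscr{P}(A))$ carefully, including the induced representations and shuffles in the definition of $\circ$. One must also check that the equivariance and quotient by $\mathbb{S}_k$ and $\mathbb{S}_{i_j}$ are respected on both sides, so that testing on representatives is legitimate. I expect this to reduce to the observation that every term involved is already balanced, so the verification on representative elements descends to the quotients.
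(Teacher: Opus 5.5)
Your proposal is correct and follows the standard Loday--Vallette argument, which the paper only cites and does not reproduce: tensor--hom adjunction turns $\gamma_A$ into an $\mathbb{S}$-module map $\Gamma:\mathscr{P}\rightarrow\text{End}_A$, and the unit and associativity diagrams are then matched with the operad-morphism axioms. The concern in your last paragraph is harmless, because both routes around the square are already well-defined linear maps on $\mathscr{P}(\mathscr{P}(A))$, so it suffices that they agree on the spanning elements $\mu\otimes(\mu_1\otimes\bar a_1)\otimes\dots\otimes(\mu_k\otimes\bar a_k)$, and $\mathbb{S}_n$-equivariance reduces the check of $\Gamma\circ\gamma=\gamma_{\text{End}_A}\circ(\Gamma\circ\Gamma)$ to the identity-shuffle summands.
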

\begin{remark}\label{rem2}
    The definition of symmetric operad makes sense in any symmetric monoidal category $(\mathcal{V},\otimes,u)$.
    If $\mathcal{C}$ is a symmetric monoidal category enriched in $\mathcal{V}$, and we can define $\mathscr{P}$-algebras in $\mathcal{C}$
    for any operad $\mathscr{P}$ in $\mathcal{V}$, consisting of an object $X\in\mathcal{C}$ and a morphism of operads in $\mathcal{V}$
    \[
        \chi:\mathscr{P}\rightarrow \text{End}^{\mathcal{V}}_X.
    \]
\end{remark}

\section{\texorpdfstring{Basic theory of $H^\infty$-algebras}{Basic theory of H infty algebras}}
In this section, we will give the definition of $H^{\infty}$-algebras and their corresponding basic theory.

\subsection{\texorpdfstring{The structure of $H^\infty$}{The structure of H infty}}
Firstly, it is well-known that the category of Hopf algebras over a fixed field ${\bf k}$ has the structure of tensor(=symmetric monoidal) category with tensor product $\otimes:=\otimes_{\bf k}$ and unit object ${\bf k}$.
Let $H$ be any arbitrary Hopf algebra over ${\bf k}$. We can define $H^{\infty}=\oplus_{n\geq 0}H^{\otimes n}$, where $H^{\otimes0}={\bf k}$.
If $h\in H^{\otimes n}\subset H^\infty$, then the superscript $n$ is called \textbf{tensor degree} and we have the notation $td(x)=n$. Define by $\widehat{H^\infty}=\prod_{n\geq 0}H^{\otimes n}$ the completion of $H^\infty$.
Next, we will give a algebraic structure on $H^{\infty}$. For any $a_1\otimes\dots\otimes a_n\in H^{\otimes n}$ and $b_1\otimes\dots\otimes b_m\in H^{\otimes m}$,
define multiplication as follows:
\begin{align*}
    (a_1\otimes\dots\otimes a_n)(b_1\otimes\dots\otimes b_m)=
    \left\{
        \begin{array}{cc}
            a_1b_1\otimes\dots\otimes a_nb_n,&\text{if} \quad n=m;\\
            0,&\text{if} \quad n\neq m.
        \end{array}
    \right.
\end{align*}
Then $H^\infty$ ia an associative algebra. It is easy to see that $H^{\infty}$ is a commutative algebra if and only if $H$ is a commutative algebra.
When consider $\widehat{H^\infty}$, there is a unit denoted by $e:=(1,1_H,1_H^{\otimes2},\dots,1_H^{\otimes n},\dots)$, where $1$ is the unit of ${\bf k}$ and $1_H$ is the unit of $H$.

We can also define a comultiplication as follows:
\[
    \Delta^\infty(h_1\otimes\dots\otimes h_n)=1\otimes(h_1\otimes\dots\otimes h_n)+\sum_{i=1}^{n-1}(h_1\otimes\dots\otimes h_i)\otimes(h_{i+1}\otimes\dots\otimes h_n)+(h_1\otimes\dots\otimes h_n)\otimes 1,
\]
where $h_1,\dots,h_n\in H$.
It is easy to see that the comultiplication is coassociative.
Hence, $(H^\infty,\Delta^\infty,\varepsilon)$ has a coalgebra structure, where counit $\varepsilon:H^\infty\rightarrow k$ is defined by $u\mapsto1,h_1\otimes\dots\otimes h_n\mapsto0$.
Unfortunately, $H^\infty$ cannot have a Hopf algebra structure.

\subsection{\texorpdfstring{The structure of $H^\infty$-modules}{The structure of H infty modules}}
We will denote the category of left $H^\infty$-modules by $H^\infty$-Mod.
\subsubsection{Natural gradation}
For a left $H^{\infty}$-module $W$, let $W^n=(1_H^{\otimes n})W$.
We can see that there is a natural graded structure on $H^{\infty}$-module $W$, that is $W=\oplus_{n\geq 0} W^n$.
Hence, we can also define \textbf{tensor degree} on any $H^{\infty}$-module.
Since $W^n$ can be regarded as an $H^{\otimes n}$-module, a left $H^{\infty}$-module $W$ is actually the direct sum of $H^{\otimes n}$-module for all $n\geq0$.
There is also the completion of $W$ denoted by $\widehat{W}=\prod_{n\geq 0}W^n$.
\subsubsection{Tensor products}
Let $V,W$ be two $H^{\infty}$-modules. There is a natural $H^{\infty}$-module structure on $V\otimes W$, which is given by
\begin{flalign*}
    &(h_1\otimes\dots\otimes h_n)(v^p \otimes w^q)=&
\end{flalign*}
\begin{flalign*}
    &&
    \left\{
        \begin{array}{cc}
            (h_1\otimes\dots\otimes h_p)v^p \otimes (h_{p+1}\otimes\dots\otimes h_n)w^q,&\text{if} \quad n=p+q;\\
            0,&\text{if} \quad n\neq p+q,
        \end{array}
    \right.
\end{flalign*}
for any $h_1,\dots,h_n \in H$ and $v^p\in V^p, w^q\in W^q$. Hence, we define the tensor product of two $H^{\infty}$-modules.
We denote the tensor product of two $H^{\infty}$-modules $V,W$ with the above $H^{\infty}$-module structure by $V\boxtimes W$.
Actually, we can regard $V\boxtimes W$ as $\bigoplus_{n\geq0}(\bigoplus_{p+q=n}V^p\boxtimes W^q)$.
\subsubsection{Homomorphisms}
Next, we will study the homomorphisms of two $H^{\infty}$-modules.
\begin{lemma}\label{lem1}
    Let $V,W$ be two $H^{\infty}$-modules. For any $f\in \text{Hom}_{H^\infty}(V,W)$, we have $f(V^n)\subseteq W^n$.
\end{lemma}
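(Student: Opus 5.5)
The plan is to use the idempotents $1_H^{\otimes n}\in H^{\otimes n}\subset H^\infty$, through which the grading $W^n=(1_H^{\otimes n})W$ is defined, together with $H^\infty$-linearity of $f$. The only point needing care is that $W=\bigoplus_{n\geq0}W^n$ presupposes that $1_H^{\otimes n}$ acts on $W^n$ as the identity. So we first record the elementary facts that make the grading work. By the definition of the multiplication on $H^\infty$,
\[
(1_H^{\otimes n})(1_H^{\otimes m})=\delta_{nm}\,1_H^{\otimes n},
\]
so the $1_H^{\otimes n}$ are pairwise orthogonal idempotents. Hence $1_H^{\otimes n}$ acts on $W^n$ as the identity and kills $W^m$ for $m\neq n$. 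We will use these facts, which the paper uses implicitly when it writes $W=\oplus_{n\geq0}W^n$, for both $V$ and $W$.

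Now take $v\in V^n$. Write $v=(1_H^{\otimes n})v'$ for some $v'\in V$; equivalently, by idempotency, $v=(1_H^{\otimes n})v$. Since $f$ is a homomorphism of $H^\infty$-modules, it commutes with the action of every element of $H^\infty$, and in particular with $1_H^{\otimes n}$. Therefore
\[
f(v)=f\bigl((1_H^{\otimes n})v\bigr)=(1_H^{\otimes n})f(v),
\]
and the right-hand side lies in $(1_H^{\otimes n})W=W^n$ by definition. This gives $f(V^n)\subseteq W^n$.

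There is no real obstacle here; the argument is a one-line consequence of linearity. The only subtlety is that $H^\infty$ has no unit (the unit $e$ lives only in the completion $\widehat{H^\infty}$). We therefore do not argue via a unit, but directly via the individual idempotents $1_H^{\otimes n}$, which do belong to $H^\infty$. This is also all that is needed to make sense of the decomposition of $W$ into the pieces $W^n$.
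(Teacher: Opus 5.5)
Your proof is correct. It uses the same mechanism as the paper's proof, namely that $f$ commutes with the idempotents $1_H^{\otimes k}\in H^\infty$, but you run it directly where the paper argues by contradiction.

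The paper supposes $f(v)\in W^m$ with $m\neq n$ and $f(v)\neq 0$, applies $1_H^{\otimes m}$, and uses orthogonality, $1_H^{\otimes m}v=0$. As written, this only excludes a nonzero \emph{homogeneous} $f(v)$ of the wrong degree. To cover a general $f(v)$, one must decompose it via $W=\bigoplus_m W^m$ and remove each off-degree component by the same trick.

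Your version applies $1_H^{\otimes n}$ itself and gets $f(v)=1_H^{\otimes n}f(v)\in W^n$ in one step. It therefore needs neither the homogeneity assumption nor the decomposition of $W$. Idempotency is not even required: $f(1_H^{\otimes n}v')=1_H^{\otimes n}f(v')$ already lies in $1_H^{\otimes n}W=W^n$.

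One correction to your closing remark. Orthogonality of the idempotents shows that the sum $\sum_n W^n$ is direct, but not that it exhausts $W$. Because $H^\infty$ is non-unital, a nonzero module on which $H^\infty$ acts by zero has $W^n=0$ for all $n$. So $W=\bigoplus_n W^n$ (equivalently, $H^\infty W=W$) is a tacit hypothesis in the paper, not a consequence of the idempotent relations. This does not affect your argument, which never uses it.
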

\begin{proof}
    Suppose there is an element $v\in V^n$ such that $f(v)\in W^m,m\neq n$ and $f(v)\neq 0$. Then using $1_H^{\otimes m}$ to act on $f(v)$,
    we have $f(v)=1_H^{\otimes m}f(v)=f(1_H^{\otimes m}v)=f(0)=0$. So we get a contradiction.
\end{proof}
\begin{lemma}
    For any $H^\infty$-module $V$, we have the following isomorphisms of $H^\infty$-modules
    \[
        H^\infty\otimes_{H^\infty}V\cong V\quad\text{and}\quad\text{Hom}_{H^\infty}(H^\infty,V)\cong \widehat{V}.
    \]
\end{lemma}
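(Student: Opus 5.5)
The plan is to reduce both isomorphisms to the degree-wise decomposition. The key facts are: $H^\infty=\bigoplus_{n\ge0}H^{\otimes n}$ is a direct sum of the orthogonal ideals $H^{\otimes n}$, each with unit $1_H^{\otimes n}$; every $H^\infty$-module decomposes as $V=\bigoplus_n V^n$ with $V^n=(1_H^{\otimes n})V$ (the natural gradation); and morphisms preserve degree (Lemma \ref{lem1}).

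\textbf{First isomorphism.} Define $\mu:H^\infty\otimes_{H^\infty}V\to V$ by $a\otimes v\mapsto av$. It is well defined, since it is balanced, and it is $H^\infty$-linear. For the inverse, send $v=\sum_n v^n$ with $v^n\in V^n$ (a finite sum) to $\sum_n 1_H^{\otimes n}\otimes v^n$. Then $\mu$ of this element is $\sum_n 1_H^{\otimes n}v^n=\sum_n v^n=v$, because $1_H^{\otimes n}$ acts as the identity on $V^n$. In the other direction, for $a\in H^{\otimes m}$ we write $a=a\,1_H^{\otimes m}$, so that
\[
a\otimes v=a\otimes 1_H^{\otimes m}v=a\otimes v^m=a\,1_H^{\otimes m}\otimes v^m=1_H^{\otimes m}\otimes av,
\]
using that $H^{\otimes m}$ is commutative with respect to its unit, i.e.\ $a=1_H^{\otimes m}a$. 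Hence every element of the tensor product has the form $\sum_n 1_H^{\otimes n}\otimes v^n$, and the two maps are mutually inverse. The argument is just unitality degree by degree, and the only care needed is that $H^\infty$ is non-unital; this is exactly why we use the local units $1_H^{\otimes n}$ rather than a global unit.

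\textbf{Second isomorphism.} Send $f\in\text{Hom}_{H^\infty}(H^\infty,V)$ to $\Phi(f)=(f(1_H^{\otimes n}))_{n\ge0}\in\prod_n V^n=\widehat V$. This lands in $\widehat V$ by Lemma \ref{lem1}, since $1_H^{\otimes n}\in (H^\infty)^n$. For injectivity, any $a\in H^{\otimes n}$ satisfies $f(a)=f(a1_H^{\otimes n})=af(1_H^{\otimes n})$, so $f$ is determined by its values on the local units. For surjectivity, given $(v^n)_n\in\widehat V$, define $f(a)=\sum_n a_nv^n$ where $a=\sum_n a_n$ with $a_n\in H^{\otimes n}$. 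This is a finite sum, and it is $H^\infty$-linear because products of elements of different degrees vanish and $H^{\otimes n}$ acts on $V^n$. The $H^\infty$-module structure on $\widehat V$ is the componentwise one, and with it $\Phi$ is $H^\infty$-linear: the module structure on the Hom-space is $(bf)(a)=f(ab)$, which gives $(bf)(1_H^{\otimes n})=f(b_n)=b_nf(1_H^{\otimes n})$. One should note that $H^\infty$ is noncommutative in general, so the Hom-space really only carries a natural $H^\infty$-action via right multiplication on the source; I will state it in that form. Finally, the appearance of the completion rather than $V$ itself is explained by the fact that $f$ may be nonzero on infinitely many degrees.
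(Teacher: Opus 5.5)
Your proof is correct and follows the same route as the paper, which likewise uses $v^n\mapsto 1_H^{\otimes n}\otimes_{H^\infty}v^n$ for the first isomorphism and evaluation at the local units $1,1_H,1_H^{\otimes 2},\dots$ for the second. You also supply checks the paper only asserts: the explicit inverse $a\otimes v\mapsto av$, injectivity and surjectivity of evaluation, and the module structure $(bf)(a)=f(ab)$ on the Hom-space. Like the paper, you rely on the standing convention $V=\bigoplus_n V^n$ from the natural gradation.
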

\begin{proof}
    Define $f:V\rightarrow H^\infty\otimes_{H^\infty}V$, $v^n\mapsto1_H^{\otimes n}\otimes_{H^\infty}v^n$, where $v^n\in V^n$.
    We can see that $f$ is injective and surjective. For $h^\infty\in H^\infty$ with $td(h^\infty)=n$,
    we have $f(h^\infty v^n)=1_H^{\otimes n}\otimes_{H^\infty}h^\infty v^n=h^\infty\otimes_{H^\infty}v^n=h^\infty f(v^n)$.
    Hence, $H^\infty\otimes_{H^\infty}V\cong V$.

    The $H^\infty$-generators of $H^\infty$ are $1,1_H,1_H^{\otimes2},\dots,1_H^{\otimes n},\dots$.
    Define $g:\text{Hom}_{H^\infty}(H^\infty,V^\infty)\rightarrow\widehat{V^\infty}$, $\alpha\mapsto(\alpha(1),\alpha(1_H),\alpha(1_H^{\otimes2}),\dots,\alpha(1_H^{\otimes n}),\dots)$,
    where $\alpha(1_H^{\otimes n})\in V^n$. We can see that $g$ is also an injective and surjective homomorphism. 
    Hence, $\text{Hom}_{H^\infty}(H^\infty,V^\infty)\cong \widehat{V^\infty}$.
\end{proof}
\begin{remark}\label{rem1}
    Actually, we have $H^\infty\otimes_{H^\infty}V=\bigoplus_{n\geq0}H^{\otimes n}\otimes_{H^\infty}V^n\cong\bigoplus_{n\geq0}V^n=V^\infty$.
    Similarly, we have
    $\text{Hom}_{H^\infty}(H^\infty,V)=\text{Hom}_{H^\infty}(\bigoplus_{n\geq0}H^{\otimes n},\bigoplus_{n\geq0}V^n)\cong\prod_{n\geq0}\text{Hom}_{H^{\otimes n}}(H^{\otimes n},V^n)\cong\prod_{n\geq0}V^n=\widehat{V}$.
\end{remark}
\begin{proposition}\label{prop1}
    Let $V,W$ be two $H^{\infty}$-modules, we have the following decomposition
    \[
    \text{Hom}_{H^\infty}(V,W)=\prod_{n\geq0}\text{Hom}_{H^{\otimes n}}(V^n,W^n).
    \]
\end{proposition}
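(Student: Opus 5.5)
We prove the decomposition by writing down two mutually inverse maps. We use the natural gradation $V=\bigoplus_{n\geq0}V^n$ and $W=\bigoplus_{n\geq0}W^n$, where $V^n=(1_H^{\otimes n})V$. Each $V^n$ is an $H^{\otimes n}$-module, and $H^{\otimes m}$ acts by zero on $V^n$ for $m\neq n$ by the definition of the multiplication on $H^\infty$.

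\textbf{Restriction map.} Given $f\in\text{Hom}_{H^\infty}(V,W)$, Lemma \ref{lem1} gives $f(V^n)\subseteq W^n$. Hence the restriction $f_n:=f|_{V^n}:V^n\to W^n$ is defined. Since $H^{\otimes n}\subset H^\infty$ and $f$ is $H^\infty$-linear, $f_n$ is $H^{\otimes n}$-linear. This defines
\[
\Phi:\text{Hom}_{H^\infty}(V,W)\to\prod_{n\geq0}\text{Hom}_{H^{\otimes n}}(V^n,W^n),\qquad f\mapsto(f_n)_{n\geq0}.
\]
The map $\Phi$ is injective, because $V$ is the direct sum of the $V^n$ and so $f$ is determined by its restrictions.

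\textbf{Inverse map.} Given a family $(f_n)_{n\geq 0}$, define $f:V\to W$ by $f(\sum_n v^n)=\sum_n f_n(v^n)$. This is a finite sum, because every element of $V$ has only finitely many nonzero homogeneous components. This finiteness is exactly why the right-hand side is a product rather than a direct sum: no condition on the family is needed.

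The main point to verify is that $f$ is $H^\infty$-linear. It suffices to check this on homogeneous elements $h\in H^{\otimes m}$ and $v^n\in V^n$.
\begin{itemize}
\item If $m=n$, then $f(hv^n)=f_n(hv^n)=hf_n(v^n)$, since $hv^n\in V^n$ and $f_n$ is $H^{\otimes n}$-linear.
\item If $m\neq n$, then $hv^n=h1_H^{\otimes n}v^n=0$, because $h\cdot 1_H^{\otimes n}=0$ in $H^\infty$. Similarly $hf_n(v^n)=0$, since $f_n(v^n)\in W^n$. So both sides vanish.
\end{itemize}

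By construction the two assignments are mutually inverse, which gives the stated decomposition.
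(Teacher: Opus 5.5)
Your proof is correct and is essentially the paper's argument: the paper just cites Lemma \ref{lem1} (morphisms preserve tensor degree) together with the identification of $\text{Hom}$ out of $\bigoplus_{n}V^n$ with a product, as in Remark \ref{rem1}, and you supply the omitted details, in particular the check that the glued map is $H^\infty$-linear. Like the paper, you rely on the standing convention that every $H^\infty$-module equals $\bigoplus_{n\geq0}V^n$; this is a genuine assumption, since $H^\infty$ has no unit.
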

\begin{proof}
    It follows immediately from lemma \ref{lem1} and a similar discussion in remark \ref{rem1}.
\end{proof}

There is also $(H^\infty)^{\otimes n}$-module $V_1\otimes\dots\otimes V_n$ defined by the natural tensor products of n $H^\infty$-modules $V_1,\dots,V_n$.
We have $V_1\otimes\dots\otimes V_n=\bigoplus_{k_1,\dots,k_n\geq0}V_1^{k_1}\otimes\dots\otimes V_n^{k_n}$ and we can define tensor degree using the grading.
We can define an $(H^\infty)^{\otimes n}$-module structure on an $H^\infty$-module $V$
by the following canonical surjective map from $(H^\infty)^{\otimes n}$ to $H^\infty$
\begin{flalign*}
    &(h_1\otimes\dots\otimes h_{k_1})\otimes\dots\otimes(h_{k_1+\dots+k_{n-1}+1}\otimes\dots\otimes h_{k_1+\dots+k_n})\mapsto&
\end{flalign*}
\begin{flalign*}
    &&h_1\otimes\dots\otimes h_{k_1}\otimes\dots\otimes h_{k_1+\dots+k_{n-1}+1}\otimes\dots\otimes h_{k_1+\dots+k_n},
\end{flalign*}
for $h_1,\dots,h_{k_n}\in H,k_1,\dots,k_n\geq0$. When some $k_i=0$, the formal notation $h_{k_1+\dots+k_{i-1}+1}\otimes\dots\otimes h_{k_1+\dots+k_i}=1$.
\begin{lemma}\label{lem2}
    For any $H^\infty$-module $V$, we have the following isomorphism of sets
    \[
    \text{Hom}_{(H^\infty)^{\otimes n}}(V^{\otimes n},V)\cong\text{Hom}_{\bf k}((V^0)^{\otimes n},V^0).
    \]
\end{lemma}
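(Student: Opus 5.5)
The plan is to decompose both sides by tensor degree, the same way Proposition \ref{prop1} and Remark \ref{rem1} do for single $H^\infty$-modules. We regard $V^{\otimes n}$ as an $(H^\infty)^{\otimes n}$-module and write
\[
V^{\otimes n}=\bigoplus_{k_1,\dots,k_n\geq0}V^{k_1}\otimes\dots\otimes V^{k_n}.
\]
The summand indexed by $(k_1,\dots,k_n)$ is exactly the image of the idempotent $1_H^{\otimes k_1}\otimes\dots\otimes 1_H^{\otimes k_n}\in(H^\infty)^{\otimes n}$. Under the canonical surjection $(H^\infty)^{\otimes n}\to H^\infty$, this idempotent is sent to $1_H^{\otimes(k_1+\dots+k_n)}$. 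It therefore acts on $V$ as the projection onto $V^{k_1+\dots+k_n}$.

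\textbf{Step 1.} Let $f$ be an $(H^\infty)^{\otimes n}$-linear map $V^{\otimes n}\to V$. Arguing as in Lemma \ref{lem1}, we get
\[
f(V^{k_1}\otimes\dots\otimes V^{k_n})\subseteq V^{k_1+\dots+k_n}.
\]
So $f$ is determined by its restrictions to the summands.

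\textbf{Step 2.} We look for more vanishing, using idempotents with mixed degrees. Take $v_i\in V^{k_i}$ and suppose $k_j>0$ for some $j$. Consider the element of $(H^\infty)^{\otimes n}$ that agrees with $1_H^{\otimes k_i}$ in every slot except slot $j$, where it has a different degree, for instance $1_H^{\otimes(k_j-1)}$, or $1\in H^{\otimes 0}$. Call it $e'$.
\begin{itemize}
\item $e'$ kills $v_1\otimes\dots\otimes v_n$, because $1_H^{\otimes m}$ annihilates $V^{k_j}$ whenever $m\neq k_j$.
\item Under the canonical surjection, $e'$ maps to $1_H^{\otimes m'}$ for some $m'$. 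When slot $j$ is changed to degree $0$, this gives $m'=k_1+\dots+k_n-k_j$.
\end{itemize}
This is where the statement really turns on how the canonical map is defined. The intended point seems to be that the $(H^\infty)^{\otimes n}$-action on $V$ only sees the total degree, and so cannot tell apart different splittings $(k_1,\dots,k_n)$ with the same sum. Then linearity of $f$ should force the following: for $f(v_1\otimes\dots\otimes v_n)$ to be compatible with every element of $(H^\infty)^{\otimes n}$ having the same image, the value must vanish unless all $k_i=0$.

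I expect this to be the main obstacle. With the definitions read literally, compatibility with the idempotents alone gives only the degree condition of Step 1, not vanishing. So I would first try to exploit non-idempotent elements. Take $a\otimes b$ and $a'\otimes b'$ in $(H^\infty)^{\otimes n}$ that have the same image in $H^\infty$ but act differently on $V^{k_1}\otimes\dots\otimes V^{k_n}$. Applying $f$ to both, $H^\infty$-equivariance gives equal actions on the target, which produces relations. For example, with $k_1=1$ and $k_2=0$, compare the elements $h\otimes 1$ and $1\otimes h$ (degrees $1$ and $0$, versus $0$ and $1$). Both map to $h\in H$ and act identically on the target, yet on the source one acts by $h$ and the other kills it. This gives $h f(v_1\otimes v_2)=0$ for every $h$; taking $h=1_H$ yields $f(v_1\otimes v_2)=0$. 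The same trick works whenever some $k_j>0$: move a factor of degree $k_j$ into another slot to produce an element with the same image that annihilates the source.

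\textbf{Step 3.} Once Step 2 is in place, $f$ vanishes on every summand except $(V^0)^{\otimes n}$. That summand is acted on by $(H^{\otimes0})^{\otimes n}=\mathbf{k}$, so the linearity conditions there are just $\mathbf{k}$-linearity. The restriction map
\[
f\mapsto f|_{(V^0)^{\otimes n}}
\]
therefore lands in $\text{Hom}_{\bf k}((V^0)^{\otimes n},V^0)$. It is injective by Steps 1 and 2. It is surjective because any $\mathbf{k}$-linear map $(V^0)^{\otimes n}\to V^0$, extended by zero on the other summands, is $(H^\infty)^{\otimes n}$-linear; one checks this directly using that elements of positive degree kill $V^0$. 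This gives the claimed bijection.
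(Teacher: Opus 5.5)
Your proof is correct and is essentially the paper's. The paper compares the idempotents $1_H^{\otimes k_1}\otimes\dots\otimes 1_H^{\otimes k_n}$ and $1_H^{\otimes(k_1+\dots+k_n)}\otimes 1\otimes\dots\otimes 1$, which have the same image in $H^\infty$ while the second kills the summand whenever some $k_j>0$ with $j\geq 2$; this is exactly your ``move the degree into another slot'' trick, so your aside that idempotents alone only give the degree condition is mistaken, though harmless, and your extension-by-zero check in Step 3 makes explicit the surjectivity the paper leaves implicit (both arguments tacitly need a second slot, i.e.\ $n\geq 2$).
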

\begin{proof}
    Let $f\in \text{Hom}_{(H^\infty)^{\otimes n}}(V^{\otimes n},V)$ and $v^1\otimes\dots\otimes v^n\in(H^\infty)^{\otimes n}$
    with $td(v^1\otimes\dots\otimes v^n)=(k_1,\dots,k_n)\neq(0,\dots,0)$. Suppose that $k_n\neq0$.
    Then we have
    \begin{flalign*}
        &f(v^1\otimes\dots\otimes v^n)=f(((1_H)^{\otimes k_1}\otimes\dots\otimes (1_H)^{\otimes k_n})(v^1\otimes\dots\otimes v^n))&
    \end{flalign*}
    \begin{flalign*}
        &&=(1_H)^{\otimes (k_1+\dots+k_n)}f(v^1\otimes\dots\otimes v^n)=f(((1_H)^{\otimes (k_1+\dots+k_n)}\otimes u\otimes\dots\otimes u)(v^1\otimes\dots\otimes v^n))=0.
    \end{flalign*}   
    So $f(v^1\otimes\dots\otimes v^n)\neq0$ only if $td(v^1\otimes\dots\otimes v^n)=(0,\dots,0)$.
    Hence, we can regard $f$ as an element which comes from $\text{Hom}_{{\bf k}^{\otimes n}}((V^0)^{\otimes n},V^0)\cong\text{Hom}_{\bf k}((V^0)^{\otimes n},V^0)$.
\end{proof}
\begin{remark}
    By lemma \ref{lem2}, it is not very interesting to study $(H^\infty)^{\otimes n}$-homomorphisms from $V^{\otimes n}$ to $V$.
    Alternatively, we will concentrate on $H^\infty$-homomorphisms from $V^{\boxtimes n}$ to $V$.
\end{remark}
\begin{proposition}
    For any $H^\infty$-module $V$, we have the following isomorphism of sets
    \[
    \text{Hom}_{H^\infty}(V^{\boxtimes n},V)\cong\prod_{\substack{k\geq0\\k_1+\dots+k_n=k}}\text{Hom}_{H^{\otimes k}}(\boxtimes_{i=1}^nV^{k_i},V^k).
    \]
\end{proposition}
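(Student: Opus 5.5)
Apply Proposition \ref{prop1} to the pair of $H^\infty$-modules $V^{\boxtimes n}$ and $V$. This gives
\[
\text{Hom}_{H^\infty}(V^{\boxtimes n},V)=\prod_{k\geq0}\text{Hom}_{H^{\otimes k}}\big((V^{\boxtimes n})^k,V^k\big),
\]
so it suffices to identify the degree-$k$ component $(V^{\boxtimes n})^k=(1_H^{\otimes k})V^{\boxtimes n}$ and to see that Hom out of it splits as a product.

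First, I will extend the description of $V\boxtimes W$ by induction on $n$, using the iterated definition of the $\boxtimes$-action. This gives $V^{\boxtimes n}=\bigoplus_{k\geq0}\bigoplus_{k_1+\dots+k_n=k}V^{k_1}\boxtimes\dots\boxtimes V^{k_n}$. Here $h_1\otimes\dots\otimes h_k$ acts on $V^{k_1}\boxtimes\dots\boxtimes V^{k_n}$ by cutting the string into consecutive blocks of lengths $k_1,\dots,k_n$ and letting each block act on its factor. Elements of tensor degree $\neq k$ act by zero. This requires checking that the $H^\infty$-action on $(V^{\boxtimes(n-1)})\boxtimes V$ is given by exactly this block rule, which follows from associativity of concatenating tensor strings. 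In particular $1_H^{\otimes k}$ acts as the identity on each summand with $\sum k_i=k$ and as zero otherwise. Hence
\[
(V^{\boxtimes n})^k=\bigoplus_{k_1+\dots+k_n=k}\boxtimes_{i=1}^nV^{k_i}
\]
as $H^{\otimes k}$-modules, and each summand is itself an $H^{\otimes k}$-submodule.

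Next, since each $\boxtimes_{i}V^{k_i}$ is an $H^{\otimes k}$-submodule, the universal property of the direct sum gives
\[
\text{Hom}_{H^{\otimes k}}\Big(\bigoplus_{k_1+\dots+k_n=k}\boxtimes_iV^{k_i},V^k\Big)\cong\prod_{k_1+\dots+k_n=k}\text{Hom}_{H^{\otimes k}}\big(\boxtimes_iV^{k_i},V^k\big).
\]
Taking the product over $k$ yields the stated isomorphism. Explicitly, the bijection sends $f$ to its family of restrictions, and the inverse glues a family of maps via the direct sum decomposition of $V^{\boxtimes n}$. Lemma \ref{lem1} guarantees that any such $f$ already maps degree $k$ to degree $k$, so nothing is lost.

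The main obstacle is the bookkeeping in the first step, namely verifying that the iterated $\boxtimes$ structure is the block action on each multi-degree summand. One point needs care: a $k_i$ may be $0$, where the corresponding block is empty and acts through ${\bf k}$. Everything after that step is formal.
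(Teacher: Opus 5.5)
Your argument is correct and takes the approach the paper intends. The paper states this proposition without a written proof, as an immediate consequence of Proposition~\ref{prop1} applied to $V^{\boxtimes n}$ together with the degree decomposition $V\boxtimes W=\bigoplus_{n}\bigoplus_{p+q=n}V^p\boxtimes W^q$, iterated to give $(V^{\boxtimes n})^k=\bigoplus_{k_1+\dots+k_n=k}\boxtimes_iV^{k_i}$; your treatment of the block action (including empty blocks when some $k_i=0$) and of Hom out of the direct sum supplies exactly the routine details the paper leaves implicit.
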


\subsection{\texorpdfstring{Two tensor products on $H^\infty$-Mod}{Two tensor products on H infty Mod}}
The category of $H^\infty$-modules is a $\text{Vec}_{\bf k}$-enriched category. It is not difficult to verify that $H^\infty$-Mod with tensor product $\boxtimes$ is a monoidal category.
We have the following proposition.
\begin{proposition}
    The category of $H^\infty$-modules $(H^\infty\text{-Mod},\boxtimes,{\bf k})$ with $H^\infty$-homomorphisms is a monoidal category, where ${\bf k}$ is a $H^\infty$-module concentrated on tensor degree $0$,
    i.e., $(h_1\otimes\dots\otimes h_n)\cdot 1=0$ for any $h_1\otimes\dots\otimes h_n\in H^{\otimes n},n\geq1$ and $k\cdot 1=k$ for $k\in H^{\otimes0}={\bf k}$.
\end{proposition}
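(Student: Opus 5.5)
We prove the statement directly by exhibiting the monoidal data on $(H^\infty\text{-Mod},\boxtimes,{\bf k})$ and checking the axioms. The guiding observation is the one already made in \S3.2: an $H^\infty$-module $W$ is the same as a family $\{W^n\}_{n\geq0}$ with $W^n$ an $H^{\otimes n}$-module. By Proposition \ref{prop1}, a morphism is a family of $H^{\otimes n}$-linear maps $f^n:V^n\to W^n$. In this language
\[
(V\boxtimes W)^n=\bigoplus_{p+q=n}V^p\otimes W^q ,
\]
with $H^{\otimes n}=H^{\otimes p}\otimes H^{\otimes q}$ acting factorwise. Everything therefore reduces to the usual tensor product of modules over tensor products of algebras.

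The steps are as follows.

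\textbf{Step 1: $V\boxtimes W$ is an $H^\infty$-module.} For $x\in H^{\otimes n}$ and $y\in H^{\otimes m}$, the product $xy$ vanishes unless $n=m$. When $n=m=p+q$, splitting $x$ and $y$ into their first $p$ and last $q$ tensor factors is multiplicative. Hence $(xy)(v^p\otimes w^q)=x(y(v^p\otimes w^q))$, and both sides vanish when the degrees do not match. The idempotent $1_H^{\otimes n}$ acts as the projection onto $(V\boxtimes W)^n$, consistent with the natural gradation.

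\textbf{Step 2: functoriality.} For morphisms $f:V\to V'$ and $g:W\to W'$, set
\[
f\boxtimes g=\bigoplus_{p,q}f^p\otimes g^q .
\]
By Lemma \ref{lem1} the maps $f$ and $g$ preserve degrees, so $f\boxtimes g$ commutes with the factorwise action. It is clearly compatible with identities and composition.

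\textbf{Step 3: associator and unitors.} The associator is the ordinary vector space map
\[
(u\otimes v)\otimes w\mapsto u\otimes(v\otimes w).
\]
On degree $n$ it sends $(U^p\otimes V^q)\otimes W^r$ to $U^p\otimes(V^q\otimes W^r)$ with $p+q+r=n$. In both bracketings $h_1\otimes\dots\otimes h_n$ acts by its first $p$, middle $q$ and last $r$ factors, so the map is $H^\infty$-linear. It is natural because $f\boxtimes g$ is built degreewise.

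For the unit, ${\bf k}$ is concentrated in degree $0$, so $({\bf k}\boxtimes V)^n={\bf k}\otimes V^n$. The element $h_1\otimes\dots\otimes h_n$ acts on it through the empty prefix (the formal $1$) on ${\bf k}$ and through all $n$ factors on $V^n$. Hence $1\otimes v\mapsto v$ is an $H^\infty$-isomorphism ${\bf k}\boxtimes V\cong V$, and similarly $V\boxtimes{\bf k}\cong V$.

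\textbf{Step 4: coherence.} The pentagon and triangle identities hold because the structure maps are exactly those of $(\text{Vec}_{\bf k},\otimes,{\bf k})$ restricted to graded pieces. The forgetful functor to vector spaces is faithful, so coherence is inherited.

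The main obstacle is bookkeeping rather than substance: checking carefully that the action is well defined in Step 1, in particular the role of the formal element $1\in H^{\otimes0}$ and of degree-$0$ parts. When $p=0$ or $q=0$, the splitting of $h_1\otimes\dots\otimes h_n$ must be read via the convention that an empty tensor is $1\in{\bf k}$. With this convention the same calculation handles the unit constraints.
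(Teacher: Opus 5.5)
Your proposal is correct and is the routine check the paper leaves to the reader: the paper gives no argument beyond ``it is not difficult to verify,'' and your degreewise verification of the action, functoriality, associator, unitors, and coherence (via the faithful forgetful functor to $\text{Vec}_{\bf k}$) supplies exactly that. The only input you take on trust is the paper's implicit convention that $W=\bigoplus_{n}1_H^{\otimes n}W$ for every $H^\infty$-module (equivalently $H^\infty W=W$, which is not automatic since $H^\infty$ has no unit); this convention is already needed for the paper's definition of $\boxtimes$ to make sense, so it is not a gap in your argument.
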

\begin{remark}
    The above monoidal category is not symmetric.
    For any $H^\infty$-modules $V,W$, we can define the following map
    \[
        c_{V,W}:V\boxtimes W\rightarrow W\boxtimes V,\quad v^n\boxtimes w^m\mapsto w^m\boxtimes v^n,
    \]
    where $v^n\in V^n,w^m\in W^m$. We can see that $c_{V,W}$ is not an $H^\infty$-homomorphism, but only a ${\bf k}$-map.
\end{remark}

In order to construct operad theory, we need a symmetric monoidal category of $H^\infty$-modules by remark \ref{rem2}.
Now, we will define a new symmetric tensor product $\otimes^*$ on $H^\infty$-Mod.
Let $V_i,i\in I$ be a finite non-empty family of $H^\infty$-modules. One has
\[
(\otimes^*_I V_i)^{|J|}:=\underset{J\rightarrow I}{\oplus}\underset{I}{\boxtimes}(V_i)^{|J_i|},
\]
where $J$ is a finite set and $H^{\otimes |J|}$-module structure on $\underset{I}{\boxtimes}(V_i)^{|J_i|}$ is given by composing with a twist, i.e.,
\[
H^{\otimes |J|}\xrightarrow{\sigma}H^{\otimes |J_1|}\otimes\dots\otimes H^{\otimes |J_{|I|}|}\rightarrow\text{End}(\underset{I}{\boxtimes}(V_i)^{|J_i|}).
\]
For $I=\emptyset$, we set $\otimes^*_I V_i={\bf k}$ additionally.
The explicit formula of $V\otimes^* W$ for $|J|\leq3$ will be given as follows.
\[
(V\otimes^* W)^0=V^0\boxtimes W^0,\quad (V\otimes^* W)^1=V^1\boxtimes W^0\oplus V^0\boxtimes W^1,
\]
\[
(V\otimes^* W)^2=V^2\boxtimes W^0\oplus V^1\boxtimes W^1\oplus W^1\boxtimes V^1\oplus V^0\boxtimes W^2,
\]
\[
(V\otimes^* W)^3=V^3\boxtimes W^0\oplus V^2\boxtimes W^1\oplus V^{|\{1,3\}|}\boxtimes W^{|\{2\}|}\oplus W^1\boxtimes V^2
\]
\[
\oplus V^1\boxtimes W^2\oplus V^{|\{2\}|}\boxtimes W^{|\{1,3\}|}\oplus W^2\boxtimes V^1\oplus V^0\boxtimes W^3.
\]
The new tensor products are associative and commutative in the obvious way, hence $(H^\infty\text{-Mod},\otimes^*,{\bf k})$ is a tensor category.

\subsection{\texorpdfstring{Definition of $H^\infty$-algebras}{Definition of H infty algebras}}
\begin{definition}
    Suppose that $V$ is an $H^\infty$-module. An {\bf $H^\infty$-algebra} is an $H^\infty$-module endowed with a product
    $*\in \text{Hom}_{H^\infty}(V\otimes^* V,V)$.
\end{definition}
By the definition of symmetric tensor product $\otimes^*$ and the natural $\mathbb{S}_2$-action on $\text{Hom}_{H^\infty}(V\otimes^* V,V)$,
we have the following isomorphism of sets
\[
\text{Hom}_{H^\infty}(V\otimes^* V,V)\cong\prod_{\substack{J=J_1\sqcup J_2\\\text{with }1\in J_1}}\text{Hom}_{H^{\otimes |J|}}(V^{|J_1|}\boxtimes V^{|J_2|},V^{|J|}).
\]
Let $W$ be another $H^\infty$-algebra. A \textbf{morphism of $H^\infty$-algebras} is an $H^\infty$-map $f:V\rightarrow W$ which commutes with $H^\infty$-products,
i.e., $f(v_1*_Vv_2)=f(v_1)*_Wf(v_2)$ for any $v_1,v_2\in V$. We denote by \textbf{$H^\infty$-alg} the category of $H^\infty$-algebras.

\subsection{\texorpdfstring{$H^{\infty}$-algebras and $H$-pseudoalgebras}{H infty algebras and H pseudoalgebras}}
In this subsection, we will explore the relationship between $H^\infty$-algebras and $H$-pseudoalgebras.

An $H$-pseudoalgebra structure over a $H$-module $V$ is a pseudoproduct $*\in \text{Hom}_{H^{\otimes2}}(V\boxtimes V, H^{\otimes2}\otimes_H V)$.
By expanding the pseudoproduct naturally, we can immediately get the following embedding map.
\begin{lemma}
    For the homomorphism set $\text{Hom}_{H^{\otimes2}}(V\boxtimes V, H^{\otimes2}\otimes_H V)$ of pseudoproducts, there is a natural embedding map for $m,n\geq1$
    \begin{flalign*}
        &\text{Hom}_{H^{\otimes2}}(V\boxtimes V, H^{\otimes2}\otimes_H V)\simeq \text{Hom}_{H^{\otimes2}}((H\otimes_H V)\boxtimes (H\otimes_H V), H^{\otimes2}\otimes_H V)&
    \end{flalign*}
\begin{flalign*}
    &&\hookrightarrow \text{Hom}_{H^{\otimes(m+n)}}((H^{\otimes m}\otimes_H V)\boxtimes(H^{\otimes n}\otimes_H V),H^{\otimes(m+n)}\otimes_H V).
\end{flalign*}
Moverover, for a pseudoproduct $*$, we can uniquely correspond to a pseudoproduct sequence
\begin{flalign*}
&(*_{(0,0)},*_{(0,1)},*_{(1,0)},*_{(0,2)},*_{(1,1)},*_{(2,0)},*_{(0,3)}*_{(1,2)},*_{(2,1)},*_{(3,0)},\dots)\in&
\end{flalign*}
\begin{flalign*}
&&\prod_{m,n\geq 0} \text{Hom}_{H^{\otimes(m+n)}}((H^{\otimes m}\otimes_H V)\boxtimes(H^{\otimes n}\otimes_H V),H^{\otimes(m+n)}\otimes_H V),
\end{flalign*}
where 
\begin{flalign*}
    &*_{(m,n)}:(h_1\otimes\dots\otimes h_m\otimes_H v)\boxtimes(t_1\otimes\dots\otimes t_n\otimes_H w)\mapsto&
\end{flalign*}
\begin{flalign*}
    &&(h_1\otimes\dots\otimes h_m\otimes t_1\otimes\dots\otimes t_n\otimes_H id)(\Delta^{m-1}\otimes\Delta^{n-1}\otimes_H id)(v*w),
\end{flalign*}
for any $h_1,\dots,h_m,t_1,\dots,t_n\in H$ and $v,w\in V$, where $\Delta^{-1}=\varepsilon$.
\end{lemma}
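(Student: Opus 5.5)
The lemma has three parts: the first isomorphism, the extension map, and its injectivity together with the definition of the sequence. I will take them in that order.

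\emph{The isomorphism.} I will use $H\otimes_H V\cong V$ via $h\otimes_H v\mapsto hv$. This map is $H$-linear, with inverse $v\mapsto 1\otimes_H v$. Taking its $\boxtimes$-square identifies $V\boxtimes V$ with $(H\otimes_H V)\boxtimes(H\otimes_H V)$ as $H^{\otimes 2}$-modules. Precomposing with this identification gives the claimed isomorphism of Hom-sets.

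\emph{The extension map.} Given $*$, define $*_{(m,n)}$ by the displayed formula, which is just the expansion of the pseudoproduct recalled in \S2.2. First I check that the formula is well defined on $H^{\otimes m}\otimes_H V$. I must show that replacing $(F g_{(1)}\otimes\dots\otimes F g_{(m)})\otimes_H v$ by $F\otimes_H gv$ gives the same value. By $H$-bilinearity of $*$,
\[
(gv)*w=(g\otimes 1\otimes_H \mathrm{id})(v*w).
\]
Coassociativity gives $\Delta^{m-1}(g h')=\Delta^{m-1}(g)\Delta^{m-1}(h')$. Combining these, both sides agree. The second factor is handled symmetrically.

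Next comes $H^{\otimes(m+n)}$-linearity. The action of $H^{\otimes m}$ on $H^{\otimes m}\otimes_H V$ is by left multiplication on the first tensor factor, and $H^{\otimes(m+n)}$ acts on $H^{\otimes(m+n)}\otimes_H V$ in the same way. The formula is visibly linear for left multiplication in the $h$'s and $t$'s, so linearity is immediate.

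For $m=0$ or $n=0$, use the convention $H^{\otimes 0}\otimes_H V=\mathbf{k}\otimes_H V=V/H_+V$ with $\Delta^{-1}=\varepsilon$. Here $\varepsilon$ applied to the relevant leg is compatible with $\varepsilon(g)$ acting on the quotient, so the same check goes through.

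\emph{Injectivity and uniqueness.} For injectivity, observe that $*_{(1,1)}$ recovers $*$ under the isomorphism above, because $\Delta^{0}=\mathrm{id}$. So distinct pseudoproducts give distinct images already in the $(1,1)$ component. Hence the map into each $\text{Hom}_{H^{\otimes(m+n)}}$ with $m,n\ge 1$ is injective, and so is the map into the product. The assignment $*\mapsto(*_{(m,n)})_{m,n\ge0}$ is then a well-defined injection, which is exactly the claimed unique correspondence.

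\emph{Main obstacle.} I expect the well-definedness over $\otimes_H$ to be the main obstacle. This is the step where coassociativity and the $H$-bilinearity of $*$ must interact correctly; I will use cocommutativity here if the ordering of coproduct legs requires it. The degenerate components $m=0$ or $n=0$ also need this care with the counit convention.
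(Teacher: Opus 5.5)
Your route matches the paper's, which gives no separate proof and treats the lemma as immediate from the expansion in \S2.2. Your checks of the isomorphism, of well-definedness over $\otimes_H$, and of $H^{\otimes(m+n)}$-linearity are fine. One small correction there: $\Delta^{m-1}(gh')=\Delta^{m-1}(g)\Delta^{m-1}(h')$ is multiplicativity of $\Delta$, not coassociativity. Coassociativity is what you need to see that $a\otimes b\mapsto F\Delta^{m-1}(a)\otimes G\Delta^{n-1}(b)$ is right $H$-linear for the diagonal actions, so that the formula descends to $H^{\otimes2}\otimes_H V$.

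The genuine gap is in the injectivity step. The lemma asserts an embedding into $\text{Hom}_{H^{\otimes(m+n)}}((H^{\otimes m}\otimes_H V)\boxtimes(H^{\otimes n}\otimes_H V),H^{\otimes(m+n)}\otimes_H V)$ for each fixed pair $m,n\geq1$. That is, the single component $*_{(m,n)}$ must already determine $*$. Your observation that $*_{(1,1)}$ recovers $*$ proves injectivity into the $(1,1)$ factor and into the product. The ``Hence'' extending this to every $(m,n)$ does not follow: nothing you wrote rules out two pseudoproducts with, say, the same $*_{(2,3)}$.

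To close the gap, evaluate at $(1_H^{\otimes m}\otimes_H v)\boxtimes(1_H^{\otimes n}\otimes_H w)$. This gives $\bigl((\Delta^{m-1}\otimes\Delta^{n-1})\otimes_H\mathrm{id}\bigr)(v*w)$, so it suffices to show that $(\Delta^{m-1}\otimes\Delta^{n-1})\otimes_H\mathrm{id}:H^{\otimes2}\otimes_H V\to H^{\otimes(m+n)}\otimes_H V$ is injective. Injectivity of $\Delta^{m-1}\otimes\Delta^{n-1}$ on $H^{\otimes2}$ alone is not enough, since $-\otimes_H V$ need not preserve injections. What works is the retraction $p=\mathrm{id}\otimes\varepsilon^{\otimes(m-1)}\otimes\mathrm{id}\otimes\varepsilon^{\otimes(n-1)}:H^{\otimes(m+n)}\to H^{\otimes2}$. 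By the counit axiom, $p$ is right $H$-linear for the diagonal actions and satisfies $p\circ(\Delta^{m-1}\otimes\Delta^{n-1})=\mathrm{id}$. Hence $p\otimes_H\mathrm{id}_V$ is a left inverse, which gives the embedding for every $m,n\geq1$.
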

\begin{proposition}
    There is a set isomorphism 
    \begin{flalign*}
    &\prod_{m,n\geq 0} \text{Hom}_{H^{\otimes(m+n)}}((H^{\otimes m}\otimes_H V)\boxtimes(H^{\otimes n}\otimes_H V),H^{\otimes(m+n)}\otimes_H V)&
    \end{flalign*}
    \begin{flalign*}
    &&\simeq \text{Hom}_{H^\infty}(V^\infty\otimes^* V^\infty, V^\infty),
    \end{flalign*}
    where $V^\infty$ is a regular left $H^\infty$-module from a left $H$-module $V$.
\end{proposition}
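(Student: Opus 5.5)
We first pin down $V^\infty$. For a left $H$-module $V$ we take $V^\infty=\bigoplus_{n\geq0}(V^\infty)^n$ with $(V^\infty)^n:=H^{\otimes n}\otimes_H V$. The left $H^{\otimes n}$-module structure is multiplication on the first factor, and $(V^\infty)^0={\bf k}\otimes_H V$, where $H$ acts on ${\bf k}$ through $\varepsilon$. We then compute both sides degree by degree.

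By Proposition \ref{prop1}, an $H^\infty$-homomorphism $V^\infty\otimes^* V^\infty\to V^\infty$ is the same as a family, indexed by $|J|=k\geq0$, of $H^{\otimes k}$-maps $(V^\infty\otimes^*V^\infty)^{k}\to H^{\otimes k}\otimes_H V$. By the definition of $\otimes^*$, the source in degree $k$ is the direct sum over all maps $J\to\{1,2\}$, that is, over ordered decompositions $J=J_1\sqcup J_2$. The summand for a given decomposition is $(H^{\otimes|J_1|}\otimes_HV)\boxtimes(H^{\otimes|J_2|}\otimes_HV)$, with the $H^{\otimes k}$-action twisted by the shuffle $\sigma$. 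Therefore
\[
\text{Hom}_{H^\infty}(V^\infty\otimes^* V^\infty,V^\infty)\cong\prod_{k\geq0}\ \prod_{J=J_1\sqcup J_2}\text{Hom}_{H^{\otimes k}}\bigl((H^{\otimes|J_1|}\otimes_HV)\boxtimes(H^{\otimes|J_2|}\otimes_HV)^{\sigma},H^{\otimes k}\otimes_HV\bigr).
\]

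The key step is to reduce each factor to the case where $J_1=\{1,\dots,m\}$ and $J_2=\{m+1,\dots,m+n\}$. Fix a decomposition with $|J_1|=m$ and $|J_2|=n$, and let $\sigma\in\mathbb{S}_k$ be the unique shuffle sending the standard decomposition to $(J_1,J_2)$. The permutation $\sigma$ acts on $H^{\otimes k}$ by permuting tensor factors. This action is an algebra automorphism, and since $H$ is cocommutative it is compatible with the diagonal right $H$-action (\ref{fo1}). Hence it induces an isomorphism $H^{\otimes k}\otimes_HV\to H^{\otimes k}\otimes_HV$ that intertwines the $H^{\otimes k}$-action with its $\sigma$-twist. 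Conjugating by this isomorphism gives a bijection between the factor for $(J_1,J_2)$ and the factor
\[
\text{Hom}_{H^{\otimes(m+n)}}\bigl((H^{\otimes m}\otimes_HV)\boxtimes(H^{\otimes n}\otimes_HV),H^{\otimes(m+n)}\otimes_HV\bigr).
\]

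It then remains to match the indexing sets. For each $k$, the decompositions $J=J_1\sqcup J_2$ with $|J_1|=m$ and $|J_2|=n$ are counted by the shuffles $Sh(m,n)$. We need the product on the left to be indexed by pairs $(m,n)$ alone, with no shuffle multiplicity. The natural reading is that the $\mathbb{S}_2$-symmetry of $\otimes^*$ (Hom out of the symmetric product, as in the isomorphism displayed just after the definition of $H^\infty$-algebras, where one normalizes $1\in J_1$) together with the $\mathbb{S}_k$-covariance of $V^\infty$ makes the components for different shuffles of the same type $(m,n)$ determine one another via the transport above. We would argue this as follows. An $H^\infty$-map out of $V^\infty\otimes^*V^\infty$ must be natural with respect to relabelings of $J$; equivalently, $(\otimes^*)^{|J|}$ is functorial in the finite set $J$. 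This forces the component on $(J_1,J_2)$ to equal the $\sigma$-conjugate of the standard component. We then check that the resulting assignment
\[
(\ast_{(m,n)})_{m,n\geq0}\ \longmapsto\ \text{the family of all $\sigma$-conjugates}
\]
is well defined and bijective, with inverse given by restricting to the standard decompositions.

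We expect this last step to be the main obstacle, since it requires making precise which compatibility with relabelings of $J$ is built into $\otimes^*$. The first thing to try is to use the shuffle description of $\text{Ind}$ in the composition of $\mathbb{S}$-modules and interpret $(\otimes^*_IV_i)^{|J|}$ as an $\mathbb{S}_{|J|}$-equivariant object. Finally we would check the degenerate cases $m=0$ or $n=0$, where ${\bf k}\otimes_HV$ appears and $\Delta^{-1}=\varepsilon$ is used, to confirm that they are covered by the same argument.
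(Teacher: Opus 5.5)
Your first two steps are exactly the paper's proof. You split by tensor degree via Proposition~\ref{prop1}, write $(V^\infty\otimes^*V^\infty)^k$ as a sum over decompositions $J=J_1\sqcup J_2$, and use cocommutativity to transport each summand to the standard one of type $(m,n)$. The step you flag as the main obstacle is a genuine gap, and the argument you propose for it fails. The paper's proof does not resolve it either: it only asserts that the $(J_1,J_2)$-components are ``uniquely determined'' by the $(m,n)$-components and that restriction recovers the sequence. It never checks that these two maps are mutually inverse.

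Naturality with respect to relabelings of $J$ does not follow from $H^\infty$-linearity. By Proposition~\ref{prop1}, an $H^\infty$-map is just a family of $H^{\otimes k}$-maps on the graded pieces. Since $(V^\infty\otimes^*V^\infty)^k$ is a direct sum over decompositions, a map out of it is an arbitrary family of maps on the summands, and no $\mathbb{S}_k$-action enters the definition. So transport followed by restriction is the identity on the left-hand side, but restriction is not injective. In fact the right-hand side is naturally $\prod_{m,n}\text{Hom}_{H^{\otimes(m+n)}}(\cdots)^{\binom{m+n}{m}}$.

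\begin{itemize}
\item Already in degree $2$, the summands for $(\{1\},\{2\})$ and $(\{2\},\{1\})$ carry two independent maps. Each is a copy of $\text{Hom}_{H^{\otimes2}}(V\boxtimes V,H^{\otimes2}\otimes_HV)$, which is $\cong H^{\otimes2}\neq0$ for $V=H$.
\item Normalizing $1\in J_1$ does not repair the count: in degree $3$ it leaves two decompositions of type $(2,1)$ and none of type $(0,3)$.
\item If you instead impose full relabeling-equivariance as an extra hypothesis, you overshoot. The stabilizer $\mathbb{S}_m\times\mathbb{S}_n$ of the standard decomposition then forces $\mu_{(m,n)}$ itself to be $\mathbb{S}_m\times\mathbb{S}_n$-equivariant, which is not automatic. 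For $V=H$ and $(m,n)=(2,0)$, the maps are right multiplications by elements $c\in H^{\otimes2}$, and equivariance forces $c$ to be symmetric, a real restriction once $\dim H\geq2$. So your transported family need not be natural.
\end{itemize}

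What your argument, like the paper's, actually proves is a bijection between the left-hand side and a subset of $\text{Hom}_{H^\infty}(V^\infty\otimes^*V^\infty,V^\infty)$. That subset consists of the maps whose $(J_1,J_2)$-component is the transport of the standard component along the unique shuffle; it contains $\text{Hom}^{pseudo}_{H^\infty}(V^\infty\otimes^*V^\infty,V^\infty)$. The right-hand side must be cut down to this subset for the identification to hold.
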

\begin{proof}
    Fix a finite set $J$ and for any $J=J_1\sqcup J_2$ with $|J_1|=m,|J_2|=n$, an element in $\text{Hom}_{H^{\otimes |J|}}(V^{|J_1|}\boxtimes V^{|J_2|},V^{|J|})$
    is uniquely determined by an element in $\text{Hom}_{H^{\otimes(m+n)}}((H^{\otimes m}\otimes_H V)\boxtimes(H^{\otimes n}\otimes_H V),H^{\otimes(m+n)}\otimes_H V)$
    by the cocommutativity of $H$ and the natural $\mathbb{S}_2$-action.
    Hence, for any sequence
    \[(\mu_{(0,0)},\mu_{(0,1)},\mu_{(1,0)},\mu_{(0,2)},\mu_{(1,1)},\mu_{(2,0)},\mu_{(0,3)}\mu_{(1,2)},\mu_{(2,1)},\mu_{(3,0)},\dots),\]
    there is a uniquely determined sequence $(\mu_{(J_1.J_2)})_{J=J_1\sqcup J_2}$.
    Then we can have a unique $\mu_\infty$ defined as follows
    \[
        \mu_\infty|_{\text{Hom}_{H^{\otimes |J|}}(V^{|J_1|}\boxtimes V^{|J_2|},V^{|J|})}=\mu_{(J_1,J_2)}.
    \]
    On the other hand, for any $\mu_\infty\in \text{Hom}_{H^\infty}(V^\infty\otimes^* V^\infty, V^\infty)$, we can also have a unique sequence
    \[(\mu_\infty| _{(0,0)},\mu_\infty| _{(0,1)},\mu_\infty| _{(1,0)},\mu_\infty| _{(0,2)},\mu_\infty| _{(1,1)},\mu_\infty| _{(2,0)},\mu_\infty| _{(0,3)},\mu_\infty| _{(1,2)},\dots).\]
\end{proof}
Now we can construct an $H^\infty$-algebra structure on regular $V^\infty$. Define $H^\infty$-multiplication $*_\infty$ as follows
\begin{flalign*}
    &(h_1\otimes\dots\otimes h_m\otimes_H v)*_\infty (t_1\otimes\dots\otimes t_n\otimes_H w)&
\end{flalign*}
\begin{flalign*}
    =(h_1\otimes\dots\otimes h_m\otimes_H v)*_{(m,n)}(t_1\otimes\dots\otimes t_n\otimes_H w)
\end{flalign*}
\begin{flalign*}
    &&=(h_1\otimes\dots\otimes h_m\otimes t_1\otimes\dots\otimes t_n\otimes_H id)(\Delta^{m-1}\otimes\Delta^{n-1}\otimes_H id)(v*w),
\end{flalign*}
for any $h_1,\dots,h_m,t_1,\dots,t_n\in H$ and $v,w\in V$, where $\Delta^{-1}=\varepsilon$.

Denote by $\text{Hom}^{pseudo}_{H^\infty}(V^\infty\otimes^* V^\infty, V^\infty)$ the set of $H^\infty$-multiplications derived from pseudoproducts,
which is obviously a subset of $\text{Hom}_{H^\infty}(V^\infty\otimes^* V^\infty, V^\infty)$.
Then we can get the following corollary.
\begin{corollary}
    There is an isomorphism
    \[
        Hom_{H^{\otimes2}}(V\boxtimes V, H^{\otimes2}\otimes_H V)\simeq \text{Hom}^{pseudo}_{H^\infty}(V^\infty\otimes^* V^\infty, V^\infty).
    \]
\end{corollary}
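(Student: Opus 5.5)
The plan is to exhibit the bijection as the composite of two maps already in hand. The first is the embedding of the preceding lemma, which sends a pseudoproduct $*$ to its pseudoproduct sequence $(*_{(m,n)})_{m,n\geq0}$. The second is the set isomorphism of the preceding proposition, which turns such a sequence into an element $\mu_\infty\in\text{Hom}_{H^\infty}(V^\infty\otimes^* V^\infty, V^\infty)$. By construction, the image of $*$ under this composite is exactly the $H^\infty$-multiplication $*_\infty$ defined just before the corollary. Call the composite $\Phi:*\mapsto *_\infty$.

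Surjectivity of $\Phi$ onto $\text{Hom}^{pseudo}_{H^\infty}(V^\infty\otimes^* V^\infty, V^\infty)$ is immediate, since that set is \emph{defined} as the set of multiplications $*_\infty$ arising from pseudoproducts. Well-definedness needs one check: for a pseudoproduct $*$, the components $*_{(m,n)}$ must be $H^{\otimes(m+n)}$-linear and independent of the chosen representatives in $H^{\otimes m}\otimes_H V$. This is part of the content of the preceding lemma. It rests on $H$-bilinearity of $*$ together with the fact that $\Delta^{m-1}$ is an algebra map compatible with the diagonal $H$-action (\ref{fo1}). I would assume it from that lemma.

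The main point is injectivity, which I would prove by recovering $*$ from $*_\infty$. Restrict $*_\infty$ to the summand of tensor degree $(1,1)$ with $J_1=\{1\}$, $J_2=\{2\}$. This restriction is the component $*_{(1,1)}$ on $(H\otimes_H V)\boxtimes(H\otimes_H V)\to H^{\otimes 2}\otimes_H V$. Here $\Delta^{0}=\mathrm{id}$, so $*_{(1,1)}(1\otimes_H v,1\otimes_H w)=v*w$. Composing with the canonical isomorphism $H\otimes_H V\cong V$ gives back $*$ itself. Hence if $*_\infty=*'_\infty$, comparing their $(1,1)$-components gives $*=*'$.

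I expect the only delicate step to be the bookkeeping in this restriction. One must make sure that the $(1,1)$-summand of $(V^\infty\otimes^*V^\infty)^{2}$ used here is the one indexed by $J_1=\{1\}$, and not the twisted copy $W^1\boxtimes V^1$. The correspondence in the proof of the preceding proposition matches each $\mu_{(J_1,J_2)}$ with $\mu_{(m,n)}$ via cocommutativity and the $\mathbb{S}_2$-action, so the matching summand is determined and the argument goes through. With well-definedness, surjectivity and injectivity established, $\Phi$ is the asserted isomorphism.
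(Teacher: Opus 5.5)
Your proposal is correct and follows the paper's route: the corollary is meant to come from composing the lemma's embedding $*\mapsto(*_{(m,n)})_{m,n\geq0}$ with the set isomorphism of the preceding proposition, with $\text{Hom}^{pseudo}_{H^\infty}(V^\infty\otimes^* V^\infty, V^\infty)$ being the image of this composite by definition. Your recovery of $*$ from the $(1,1)$-component via $H\otimes_H V\cong V$ is the identification that underlies the lemma's first isomorphism, so it spells out the injectivity that the paper leaves implicit.
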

As a result, we know that $H^\infty$-algebra structure is a larger set containing $H$-pseudoalgebra structure as its subset.

\section{\texorpdfstring{Category setting of $H^{\infty}$-algebras}{Category setting of H infty algebras}}\label{sec1}
In this section, we will study categories and functors related to $H^\infty$-algebras. Let $H$ be a cocommutative Hopf algebra over a field ${\bf k}$.
\subsection{\texorpdfstring{From $H$-modules to $H^{\infty}$-modules}{From H-modules to H infty-modules}}
Let $V$ be a left $H$-module. We can naturally define a left $H^{\infty}$-module $V^{\infty}=H^{\infty}\otimes_H V$.
The $H^\infty$-module structure on $V^{\infty}$ is given by
\begin{align*}
    (h_1\otimes\dots\otimes h_n)(a_1\otimes\dots\otimes a_m\otimes_H v)=
    \left\{
        \begin{array}{cc}
            h_1a_1\otimes\dots\otimes h_na_n\otimes_H v,&\text{if} \quad n=m;\\
            0,&\text{if} \quad n\neq m,
        \end{array}
    \right.
\end{align*}
for any $h_1,\dots,h_n,a_1,\dots,a_n\in H$ and $v\in V$. There is an isomorphism
\[
H^{\infty}\otimes_H V=(\oplus_{n\geq 0}H^{\otimes n})\otimes_H V\simeq \oplus_{n\geq 0}(H^{\otimes n}\otimes_H V),
\]
which gives rise to the gradation of $V^\infty$ from the gradation of $H^\infty$, i.e., $(V^\infty)^n=H^{\otimes n}\otimes_H V$ for $n\geq0$.
We call $V^{\infty}$ a \textbf{regular} left $H^{\infty}$-module from a left $H$-module $V$.

Denote by $\mathcal{M}(H)$ the category of left $H$-modules. The above construction actually gives an embedding functor $\iota$ from $\mathcal{M}(H)$ to $H^\infty$-Mod.
For any $H$-map $f:V\rightarrow W$, we have the following $H^\infty$-map
\[
\iota(f):V^\infty\rightarrow W^\infty,\quad h_1\otimes\dots\otimes h_n\otimes_Hv\mapsto h_1\otimes\dots\otimes h_n\otimes_Hf(v),
\]
where $n\geq0$ and $h_1,\dots,h_n\in H,v\in V$. Furthermore, we can extend the functor $\iota$ as follows.

Denote by $\hat{\mathcal{S}}$ the category of arbitrary finite sets and maps between them. Let $\pi:I\rightarrow J$ be a map in $\hat{\mathcal{S}}$.
Then the map can be factorized as $J\twoheadrightarrow\pi(J)\hookrightarrow I$.
For a surjective map $\pi:J\rightarrow I$, we have a decomposition $J=J_1\sqcup\dots\sqcup J_{|I|}$.
There is a right $H$-action on every $H^{\otimes|J_i|}$ by formula \ref{fo1} and then we have a right $H^{\otimes|I|}$-action on $H^{\otimes|J|}$ by tensor them together.
For an injective map $\pi:J\rightarrow I$, there is a right $H^{\otimes|I|}$-module structure on $H^{\otimes|J|}$ by
regarding it as $H^{\otimes|\pi(J)|}\boxtimes{\bf k}^{\boxtimes|I\backslash\pi(J)|}$.

Denote by $\mathcal{M}(H^{\otimes n})$ the category of left $H^{\otimes n}$-modules.
Fix $n\geq1$, for any $H^{\otimes n}$-map $f^n:V^n\rightarrow W^n$, let
\[
(V^n)^\infty=\underset{\pi\in\hat{\mathcal{S}}:J\rightarrow\{1,\dots,n\}}{\bigoplus}H^{\otimes|J|}\otimes_{H^{\otimes n}}V^n
\]
and we can give the following $H^\infty$-map
\[
\iota(f^n):(V^n)^\infty\rightarrow(W^n)^\infty,\quad h_1\otimes\dots\otimes h_m\otimes_{H^{\otimes n}}v\mapsto h_1\otimes\dots\otimes h_m\otimes_{H^{\otimes n}}f^n(v),
\]
where $m\geq0$ and $h_1,\dots,h_m\in H,v\in V^n$.
In other words, this gives functor $\mathcal{M}(H^{\otimes n})\hookrightarrow H^\infty\text{-Mod}$.
When $n=0$, let $\mathcal{M}(H^{\otimes0})={\bf k}\text{-Mod}\hookrightarrow H^\infty\text{-Mod}$ be the trivial embedding functor.
Therefore, we have an extending functor of $\iota$ from $\mathcal{M}(H^{\otimes\leq n}):=\amalg_{0\leq k\leq n}\mathcal{M}(H^{\otimes k})$ to $H^\infty$-Mod.

Since $\lim_{n\rightarrow\infty}\mathcal{M}(H^{\otimes\leq n})=\amalg_{k\geq0}\mathcal{M}(H^{\otimes k})=H^\infty\text{-Mod}$, we have an endofunctor over $H^\infty$-Mod, also denoted by $\iota$.
We will discribe the functor more explicitly. Let $V=\oplus_{n\geq0}V^n$ be an $H^\infty$-module.
We have $\iota(V)=\oplus_{n\geq0}\iota(V^n)=V^0\oplus(\oplus_{n\geq1}(V^n)^\infty)$. Then
\[
(\iota(V))^0=\bigoplus_{m\geq0}{\bf k}\otimes_{H^{\otimes m}}V^m,
\]
and for $n\geq1$,
\[
(\iota(V))^n=\underset{\pi\in\hat{\mathcal{S}}:\{1,\dots,n\}\rightarrow I}{\bigoplus}H^{\otimes n}\otimes_{H^{\otimes|I|}}V^{|I|}
\]
Let $f:V\rightarrow W$ be an $H^\infty$-homomorphism. By proposition \ref{prop1}, we have $f=(f^0,f^1,f^2,\dots)$, where $f^n:V^n\rightarrow W^n$ is an $H^{\otimes n}$-map.
Then we have $(\iota(f))^n=f^n+\sum_{m\neq n}\iota(f^m)$ for $n\geq0$.

\subsection{\texorpdfstring{Interconnected $H^\infty$-modules}{Interconnected H infty modules}}
Next, we will construct a bigger category consisting of $H^\infty$-modules with extra interconnection.
Let $V=\oplus_{n\geq0}V^n$ be an $H^\infty$-module. Let $\theta_V$ be a rule that assigns to any map $\pi:J\rightarrow I$ in $\hat{\mathcal{S}}$
an $H^{\otimes |J|}$-homomorphism $\theta^{(\pi)}=\theta^{(\pi)}_V:H^{\otimes|J|}\otimes_{H^{\otimes|I|}}V^{|I|}\rightarrow V^{|J|}$.
We demand that the $\theta^{(\pi)}$ are compatible with the composition of the $\pi$'s, i.e.,
$\theta^{(\pi_1\pi_2)}=\theta^{(\pi_1)}(H^{\otimes|K|}\otimes_{H^{\otimes|J|}}\theta^{(\pi_2)})$ for $\pi_1:K\rightarrow J,\pi_2:J\rightarrow I$
and $\theta^{(\text{id}_I)}=\text{id}_{V^{|I|}}$. We call $(V,\theta_V)$ an \textbf{interconnected} $H^\infty$-module.

Let $(W,\theta_W)$ be another interconnected $H^\infty$-module.
A \textbf{morphism of interconnected $H^\infty$-modules} is an $H^\infty$-map $f:V\rightarrow W$ which commutes with interconnection rules, i.e.,
$f\circ\theta^{(\pi)}_V=\theta^{(\pi)}_W\circ(H^{\otimes|J|}\otimes_{H^{\otimes|I|}}f)$ for any map $\pi:J\rightarrow I$.
Denote by $H^\infty\text{-Mod}_\text{IC}$ the category of interconnected $H^\infty$-modules.

It is not difficult to see that the interconnection rule $\theta_V$ can be regarded as an $H^\infty$-map from $\iota(V)$ to $V$ and
when restricted to $H^{\otimes|J|}\otimes_{H^{\otimes|\pi(J)|}}(H^{\otimes|\pi(J)|}\otimes_{H^{\otimes|I|}}V^{|I|})$ the map is equal to $\theta^{(\pi)}$.
Hence, the condition of an $H^\infty$-map $f:V\rightarrow W$ commuting with interconnection rules can be written as the diagram below.
\begin{center}
    \begin{tikzpicture}
        \node(A) at (0,0) {$\iota(V)$};
        \node(B) at (0,-1.5) {$V$};
        \node(C) at (3,0) {$\iota(W)$};
        \node(D) at (3,-1.5) {$W$};
        \draw[->] (A) -- node[left]  {$\theta_V$} (B);
        \draw[->] (A) -- node[above]  {$\iota(f)$} (C);
        \draw[->] (B) -- node[above]  {$f$} (D);
        \draw[->] (C) -- node[right]  {$\theta_W$} (D);
    \end{tikzpicture}
\end{center}

Furthermore, we have the following proposition for morphisms in $H^\infty\text{-Mod}_\text{IC}$.
\begin{proposition}\label{prop2}
    Let $V^1\neq0$ and $\theta_1$ be the interconnection map $\theta$ restricted to $\iota(V^1)$, which is an $H^\infty$-map from $(V^1)^\infty$ to $V$.
    Then a morphism of interconnected $H^\infty$-modules from $V$ to $W$ is equivalent to an $H^\infty$-map $f:V\rightarrow W$ satisfying $f\circ(\theta_V)_1=(\theta_W)_1\circ\iota(f^1)$.
\end{proposition}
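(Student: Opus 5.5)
One direction is immediate. If $f$ is a morphism of interconnected $H^\infty$-modules, then $f\circ\theta_V=\theta_W\circ\iota(f)$ on all of $\iota(V)$. Restricting this identity to the summand $\iota(V^1)=(V^1)^\infty$ gives $f\circ(\theta_V)_1=(\theta_W)_1\circ\iota(f^1)$, because $\iota(f)$ restricted to $\iota(V^1)$ is $\iota(f^1)$ by the description $\iota(f)=\sum_m\iota(f^m)$. So the work is in the converse: the condition on the single summand $\iota(V^1)$ must force the commutation on every summand $\iota(V^n)=(V^n)^\infty$, that is, $f\circ\theta^{(\pi)}_V=\theta^{(\pi)}_W\circ(H^{\otimes|J|}\otimes_{H^{\otimes|I|}}f^{|I|})$ for all $\pi:J\to I$.

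The tool is the composition axiom $\theta^{(\pi_1\pi_2)}=\theta^{(\pi_1)}(H^{\otimes|K|}\otimes\theta^{(\pi_2)})$. For $\pi:J\to I$ with $|I|=n$, I would factor $\pi$ through a one-point set when possible. Take $p:I\to\{*\}$, and let $s:\{*\}\to I$ be an injection with $p\circ s=\mathrm{id}$. Then $\theta^{(\mathrm{id})}=\mathrm{id}$ and compatibility give
\[
\theta^{(s)}\circ(H\otimes_{H^{\otimes n}}\theta^{(p)})=\mathrm{id}_{V^1}.
\]
Combined with $\theta^{(\pi)}=\theta^{(p\pi)}\circ(\cdots)$-type identities, this should express each $\theta^{(\pi)}$ as a composite of maps with source or target $V^1$. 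For example, $\theta^{(\pi)}$ with source $V^{n}$ can be rewritten through $V^1$ only if $V^n$ is generated from $V^1$ via some $\theta$. To handle this, I would use the hypothesis $V^1\neq0$ together with the compatibility relations to show that $V^{n}$ is recovered from $V^1$. Concretely, for $q:\{1\}\to I$ and $p:I\to\{1\}$, the composite $\theta^{(q)}$ applied after $\theta^{(p)}$ relates $V^n$ and $V^1$. I would then verify the commutation on $(V^n)^\infty$ by writing the summand for $\pi$ as built from the summand for $\pi\circ$ (something through $\{*\}$), where $f$ commutes by hypothesis, and propagate using naturality of $H^{\otimes|J|}\otimes_{H^{\otimes|I|}}-$.

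The main obstacle is exactly this reduction: showing that every $\theta^{(\pi)}$ with $|I|\neq1$ is determined, in a way compatible with $f$, by the maps out of $V^1$. Maps from $V^0$ (that is, $I=\emptyset$) need separate attention, since nothing factors through the empty set except from it. My first attempt is to factor $\pi$ as $J\to\{*\}\to I$ whenever $\pi$ is constant, and to decompose a general $\pi$ into its fibers $J=\sqcup J_i$, treating each fiber via a map to a point. If the decomposition over fibers fails because the fiber maps do not compose into $\pi$ in $\hat{\mathcal S}$, I would fall back on checking the identity summand by summand, using Proposition \ref{prop1} to reduce equality of $H^\infty$-maps to equality on each graded piece, and handle the remaining cases $I=\emptyset$ and $J=\emptyset$ directly from the definitions.
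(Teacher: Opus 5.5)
Your forward direction is fine. Your move for the converse is exactly the paper's diagram: compose with $p:I\to\{*\}$ and use $\theta^{(p\circ\pi)}=\theta^{(\pi)}\circ\Theta_V$, where $\Theta_V:=H^{\otimes|J|}\otimes_{H^{\otimes|I|}}\theta^{(p)}_V$. Combined with the hypothesis for $p$ and for $p\circ\pi$, this yields only
\[
f^{|J|}\circ\theta^{(\pi)}_V\circ\Theta_V=\theta^{(\pi)}_W\circ\bigl(H^{\otimes|J|}\otimes_{H^{\otimes|I|}}f^{|I|}\bigr)\circ\Theta_V .
\]
You may cancel $\Theta_V$ only if it is surjective. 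That holds, by right exactness, when $\theta^{(p)}_V:H^{\otimes|I|}\otimes_H V^1\to V^{|I|}$ is onto. You rightly single out this generation issue, but the step where you plan to extract it from $V^1\neq0$ and the compatibility axioms cannot work. Relations leading from $V^n$ back to $V^1$, such as $\theta^{(s)}\circ(H\otimes_{H^{\otimes n}}\theta^{(p)})=\mathrm{id}_{V^1}$, only make $V^1$ a retract (a split-injectivity statement). They give no control over the image of $\theta^{(p)}$. Checking degree by degree does not help either, since $f^n$ is unconstrained off that image. The paper's closing sentence (``therefore the remaining third rectangle commutes'') performs the same cancellation without justification, so its proof has the same hole.

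In fact the statement is false without a generation hypothesis. Take $H=\mathbf{k}$. Then $H^{\otimes|J|}\otimes_{H^{\otimes|I|}}V^{|I|}=V^{|I|}$, and an interconnected module is just a contravariant functor $I\mapsto V^{|I|}$ on $\hat{\mathcal{S}}$. Let $V=W=C\oplus Q$, where $C$ is the constant functor $\mathbf{k}$ and $Q(I)=\mathbf{k}^I/\mathbf{k}\cdot 1$ (functions modulo constants), with $\theta^{(\pi)}$ given by pulling functions back along $\pi$. Then $V^1=\mathbf{k}\neq0$, but $Q(\{*\})=0$, so every $\theta^{(J\to\{*\})}$ lands in $C$. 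Define $f=\mathrm{id}_C\oplus g$ with $g^2=\mathrm{id}_{Q(\{1,2\})}$ and $g^n=0$ for $n\neq2$; it satisfies $f\circ(\theta_V)_1=(\theta_W)_1\circ\iota(f^1)$. Now take $\pi:\{1,2,3\}\to\{1,2\}$ with $\pi(1)=1$, $\pi(2)=\pi(3)=2$. Then $\theta^{(\pi)}[\delta_1]=[(1,0,0)]\neq0$, where $\delta_1=(1,0)$. Hence $f^3\theta^{(\pi)}=0\neq\theta^{(\pi)}f^2$ on $Q(\{1,2\})$, and $f$ is not a morphism. What your argument (and the paper's) does prove is the converse under the extra assumption that every $\theta^{(p)}_V:H^{\otimes n}\otimes_H V^1\to V^n$ is surjective. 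This holds, for example, for $V=i(M)=(M^\infty,\mathrm{id})$, where the cancellation is legitimate.
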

\begin{proof}
    We only need to show that the remaining diagrams are commutative, i.e.,
    $f^{|J|}\circ\theta^{(\pi)}_V=\theta^{(\pi)}_W\circ(H^{\otimes|J|}\otimes_{H^{\otimes|I|}}f^{|I|})$ holds for any map $\pi:J\rightarrow I$ with $|I|>1$.
    Let $\pi_1:I\rightarrow\{1\},\pi_2:J\rightarrow\{1\}$, then $\pi\pi_1=\pi_2$ and we have the following diagram.
    \begin{center}
        \begin{tikzpicture}
            \node(A) at (0,0) {$H^{\otimes|I|}\otimes_HV^1$};
            \node(B) at (5,0) {$H^{\otimes|I|}\otimes_HW^1$};
            \node(C) at (0,-1.5) {$V^{|I|}$};
            \node(D) at (5,-1.5) {$W^{|I|}$};
            \node(E) at (0,-3) {$H^{\otimes|J|}\otimes_{H^{\otimes|I|}}V^{|I|}$};
            \node(F) at (5,-3) {$H^{\otimes|J|}\otimes_{H^{\otimes|I|}}W^{|I|}$};
            \node(G) at (0,-4.5) {$V^{|J|}$};
            \node(H) at (5,-4.5) {$W^{|J|}$};
            \draw[->] (A) -- node[above]  {$\iota(f^1)$} (B);
            \draw[->] (A) -- node[left]  {$\theta^{(\pi_1)}_V$} (C);
            \draw[->] (B) -- node[right]  {$\theta^{(\pi_1)}_W$} (D);
            \draw[->] (C) -- node[above]  {$f^{|I|}$} (D);
            \draw[->] (C) -- node[left]  {$\iota$} (E);
            \draw[->] (E) -- node[above]  {$\iota(f^{|I|})$} (F);
            \draw[->] (D) -- node[right]  {$\iota$} (F);
            \draw[->] (E) -- node[left]  {$\theta^{(\pi)}_V$} (G);
            \draw[->] (F) -- node[right]  {$\theta^{(\pi)}_W$} (H);
            \draw[->] (G) -- node[above]  {$f^{|J|}$} (H);
            \draw[->][bend left=60] (B.east) to node[right]{$\theta^{(\pi_2)}_W$} (H.east);
            \draw[->][bend right=60] (A.west) to node[left]{$\theta^{(\pi_2)}_V$} (G.west);
        \end{tikzpicture}
    \end{center}

    Since $\theta^{(\pi_1)}$ and $\theta^{(\pi_2)}$ are parts of $\theta_0$, we know that the first rectangle and the big circle are commutative.
    The second rectangle commutes by the functority of $\iota$. By the compatibility of $\theta$ with the composition, both the left and right semicircles are commutative.
    Therefore, the remaining third rectangle commutes, and we complete the proof.
\end{proof}
\begin{remark}
    If $V^n\neq0$, let $\theta_n$ be the interconnection map $\theta$ restricted to $\iota(V^n)$,
    we can replace the condition in proposition \ref{prop2} with $f\circ(\theta_V)_n=(\theta_W)_n\circ\iota(f^n)$ by a similar discussion.
\end{remark}
We can verify that $H^\infty\text{-Mod}_\text{IC}$ is an abelian category.
There is an exact fully faithful embedding $i:\mathcal{M}(H)\hookrightarrow H^\infty\text{-Mod}_\text{IC}$ defined by $i(V)=(V^\infty,\text{id})$ for any $H$-module $V$,
where $\text{id}^{(\pi)}:=\text{id}_{H^{\otimes|J|}\otimes_{H^{\otimes|I|}}V^{|I|}}$, for any map $\pi:J\rightarrow I$.
This embedding is left adjoint to the projection functor $p:H^\infty\text{-Mod}_\text{IC}\rightarrow\mathcal{M}(H),(V,\theta_V)\mapsto V^1$.
Furthermore, for any $n>1$ we can define an exact fully faithful embedding $\mathcal{M}(H^{\otimes n})\hookrightarrow H^\infty\text{-Mod}_\text{IC}$ by
$V^n\mapsto((V^n)^\infty,\text{id})$ for any $H^{\otimes n}$-module $V^n$.
Therefore, when $n$ is large enough, we have an exact fully faithful embedding from $H^\infty$-Mod to $H^\infty\text{-Mod}_\text{IC}$
sending $V$ to $(\iota(V),\text{id})$, also denoted by $i$.

For any $H^\infty$-module $V$, define a special interconnection rule $\delta$ by assigning to any permutation map $\sigma:I\rightarrow I$ an $H^{\otimes|I|}$-homomorphism
\begin{flalign*}
&h_1\otimes\dots\otimes h_{|I|}\otimes_{H^{\otimes|\sigma(I)|}}v=1_H^{\otimes|I|}\otimes_{H^{\otimes|\sigma(I)|}}(h_{\sigma^{-1}(1)}\otimes\dots\otimes h_{\sigma^{-1}(|I|)})v&
\end{flalign*}
\begin{flalign*}
&&\mapsto(h_{\sigma^{-1}(1)}\otimes\dots\otimes h_{\sigma^{-1}(|I|)})v,
\end{flalign*}
where $h_1,\dots,h_{|I|}\in H,v\in V^{|I|}$, and to other maps $0$-homomorphisms.
Then there is also a natural embedding from $H^\infty$-Mod to $H^\infty\text{-Mod}_\text{IC}$ defined by $V\mapsto(V,\delta_V)$, which is also denoted by $\iota$.
Note that $i$ and $\iota$ are different: $i$ is fully faithful, while $\iota$ is not.

The tensor products can be defined on $H^\infty\text{-Mod}_\text{IC}$ naturally. Let $\{(V_i,\theta_{V_i})\}_{i\in I}$ be an $I$-family of interconnected $H^\infty$-modules.
For $i\in I$ and $\pi_i\in\hat{\mathcal{S}}:K_i\rightarrow J_i$, we define $\otimes_I\pi_i:\sqcup_I K_i\rightarrow\sqcup_I J_i$ by $(\otimes_I\pi_i)(k)=\pi_i(k)$ if $k\in K_i$.
Then we have $\theta_{\boxtimes_IV_i}^{(\otimes_I\pi_i)}:=\theta_{V_1}^{(\pi_1)}\boxtimes\dots\boxtimes\theta_{V_{|I|}}^{(\pi_{|I|})}$ from
$\boxtimes_I(H^{\otimes|K_i|}\otimes_{H^{\otimes|J_i|}}V_i^{|J_i|})=H^{\otimes|\sqcup_IK_i|}\otimes_{H^{\otimes|\sqcup_IJ_i|}}(\boxtimes_IV_i^{|J_i|})$ to $\boxtimes_IV_i^{|K_i|}$.
Hence, the tensor product of interconnected $H^\infty$-modules is given by $\boxtimes_I(V_i,\theta_{V_i})=(\boxtimes_IV_i,\theta_{\boxtimes_IV_i})$.

By definition, there is $(\otimes^*_I V_i)^{|K|}=\underset{K\rightarrow I}{\oplus}\underset{I}{\boxtimes}(V_i)^{|K_i|}$.
Then we can define $\theta_{\otimes^*_IV_i}^{(\otimes_I\pi_i)}:=\bigoplus_{K\rightarrow I}\theta_{V_1}^{(\pi_1)}\boxtimes\dots\boxtimes\theta_{V_{|I|}}^{(\pi_{|I|})}$ from
$\bigoplus_{K\rightarrow I}\boxtimes_I(H^{\otimes|K_i|}\otimes_{H^{\otimes|J_i|}}V_i^{|J_i|})$ to $(\otimes^*_I V_i)^{|K|}$.
Hence, we also get the symmetric tensor product of interconnected $H^\infty$-modules, which is defined by $\otimes^*_I(V_i,\theta_{V_i})=(\otimes^*_IV_i,\theta_{\otimes^*_IV_i})$.
\begin{remark}
    We can see that locally $\otimes^*$ is the same as $\boxtimes$, i.e., $V^n\otimes^* W^m=V^n\boxtimes W^m$ for any $H^{\otimes n}$-module $V^n$ and $H^{\otimes m}$-module $W^m$.
\end{remark}
Now consider $\pi_i\in\hat{\mathcal{S}}:K_i\rightarrow J_i$ with $|J_i|=1$.
\begin{proposition}\label{prop3}
There is a beautiful equality given by
\[
V_1^\infty\otimes^*\dots\otimes^* V_{|I|}^\infty=(V_1\boxtimes\dots\boxtimes V_{|I|})^\infty.
\]
\end{proposition}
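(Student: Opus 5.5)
The plan is to prove the equality degree by degree, and summand by summand, as an identification of $H^\infty$-modules. First I fix what the two sides mean. Each $V_i$ is a left $H$-module, so $V:=V_1\boxtimes\dots\boxtimes V_{|I|}$ is an $H^{\otimes|I|}$-module. Hence the right-hand side is $(V)^\infty$ in the sense of the extended functor $\iota$ on $\mathcal{M}(H^{\otimes|I|})$. Its degree-$|J|$ component is
\[
\bigoplus_{\pi:J\rightarrow I}H^{\otimes|J|}\otimes_{H^{\otimes|I|}}(V_1\boxtimes\dots\boxtimes V_{|I|}),
\]
where $H^{\otimes|J|}$ is a right $H^{\otimes|I|}$-module via $\pi$. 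The left-hand side, by the definition of $\otimes^*$ and $(V_i^\infty)^n=H^{\otimes n}\otimes_H V_i$, has degree-$|J|$ component
\[
\bigoplus_{\pi:J\rightarrow I}\underset{I}{\boxtimes}\bigl(H^{\otimes|J_i|}\otimes_H V_i\bigr),\qquad J_i=\pi^{-1}(i).
\]
Its $H^{\otimes|J|}$-action is twisted by the shuffle $\sigma$ regrouping the factors of $H^{\otimes|J|}$ according to the fibres. So both sides are indexed by the same set of maps $\pi:J\rightarrow I$, and it suffices to construct, for each fixed $\pi$, an isomorphism of $H^{\otimes|J|}$-modules
\[
\Phi_\pi:\underset{I}{\boxtimes}\bigl(H^{\otimes|J_i|}\otimes_H V_i\bigr)\longrightarrow H^{\otimes|J|}\otimes_{H^{\otimes|I|}}\bigl(\underset{I}{\boxtimes}V_i\bigr).
\]
These then assemble over all $\pi$ and all $J$.

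For $\pi$ surjective, I define $\Phi_\pi$ by applying $\sigma^{-1}$ to the tensor factors. That is, $\Phi_\pi$ sends $\boxtimes_i(F_i\otimes_H v_i)$ to $\sigma^{-1}(F_1\otimes\dots\otimes F_{|I|})\otimes_{H^{\otimes|I|}}(v_1\boxtimes\dots\boxtimes v_{|I|})$. The key observation is that, after the regrouping $\sigma$, the right $H^{\otimes|I|}$-action on $H^{\otimes|J|}$ induced by $\pi$ via formula (\ref{fo1}) is exactly the tensor product of the right $H$-actions on the blocks $H^{\otimes|J_i|}$. Hence the relations defining $\otimes_{H^{\otimes|I|}}$ are precisely the tensor product of the relations defining the individual $\otimes_H$. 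Using the standard identity $(A_1\otimes_{R_1}M_1)\otimes\dots\otimes(A_k\otimes_{R_k}M_k)\cong(A_1\otimes\dots\otimes A_k)\otimes_{R_1\otimes\dots\otimes R_k}(M_1\otimes\dots\otimes M_k)$ over the field ${\bf k}$, $\Phi_\pi$ is well defined and bijective. $H^{\otimes|J|}$-linearity holds because both actions are left multiplication transported through the same twist $\sigma$. Cocommutativity of $H$ is not needed here; it only guarantees that the order of factors inside each fibre is irrelevant, as elsewhere in the paper.

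For $\pi$ not surjective, some fibres $J_i$ are empty. On the left these contribute factors $H^{\otimes0}\otimes_H V_i={\bf k}\otimes_H V_i$, the coinvariants via $\varepsilon$. On the right, the paper's convention factors $\pi$ as a surjection onto $\pi(J)$ followed by an inclusion, and lets $H^{\otimes|I|}$ act on the missing coordinates through ${\bf k}^{\boxtimes|I\backslash\pi(J)|}$, i.e.\ via $\varepsilon$. Applying the same product identity with $A_i={\bf k}$ for $i\notin\pi(J)$ gives the isomorphism, reducing to the surjective case on $\pi(J)$. The degree-zero component, corresponding to $J=\emptyset$, is the special case $\boxtimes_I({\bf k}\otimes_HV_i)\cong{\bf k}\otimes_{H^{\otimes|I|}}\boxtimes_IV_i$.

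I expect the main obstacle to be bookkeeping rather than any real difficulty. One must check that the ordering conventions match: the twist $\sigma$ in the definition of $\otimes^*$ and the right $H^{\otimes|I|}$-module structure on $H^{\otimes|J|}$ for a general $\pi$ must correspond. One must also interpret the non-surjective/empty-fibre convention consistently on both sides. I would handle this by writing everything for $\pi$ factored as surjection composed with inclusion, treating the surjective case first, and then noting that the inclusion part only contributes $\varepsilon$-coinvariant factors on both sides.
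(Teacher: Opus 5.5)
Your proposal is correct and follows the same route as the paper. The paper also writes both sides as direct sums over maps $K\rightarrow I$, namely $\bigoplus_{K\rightarrow I}\boxtimes_I(H^{\otimes|K_i|}\otimes_H V_i)$ and $\bigoplus_{K\rightarrow I}H^{\otimes|K|}\otimes_{H^{\otimes|I|}}(\boxtimes_I V_i)$, and declares them equal. Your summand-wise isomorphism $\Phi_\pi$, built from $\bigotimes_k(A_k\otimes_{R_k}M_k)\cong(\bigotimes_kA_k)\otimes_{\bigotimes_kR_k}(\bigotimes_kM_k)$ and with the $\varepsilon$-treatment of empty fibres, just makes explicit the identification the paper uses without comment.
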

\begin{proof}
    The proof is completed directly when we see that the left hand side is equal to $\bigoplus_{K\rightarrow I}\boxtimes_I(H^{\otimes|K_i|}\otimes_H V_i)$
    and the right hand side is equal to $\bigoplus_{K\rightarrow I}H^{\otimes|K|}\otimes_{H^{\otimes|I|}}(\boxtimes_I V_i)$.
\end{proof}
Then we have the following proposition which is similar to proposition \ref{prop2}.
\begin{proposition}\label{prop4}
    Let $V_i^1\neq0$ and $(\theta_{V_i})_1$ be the interconnection map $\theta_{V_i}$ restricted to $\iota(V_i^1)$, which is an $H^\infty$-map from $(V_i^1)^\infty$ to $V_i$.
    Then the interconnection map $\theta_{\otimes^*_IV_i}$ restricted to $\iota(\boxtimes_IV_i)$ is equal to $\otimes^*_I(\theta_{V_i})_1$, denoted by $(\theta_{\otimes^*_IV_i})_n$.
    Hence a morphism of interconnected $H^\infty$-modules from $\otimes^*_IV_i$ to $W$ is equivalent to an $H^\infty$-map $f:\otimes^*V_i\rightarrow W$
    satisfying $f\circ(\theta_{\otimes^*_IV_i})_n=(\theta_W)_1\circ\iota(f^n)$.
\end{proposition}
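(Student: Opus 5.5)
The plan has two parts. First, identify the restriction of $\theta_{\otimes^*_IV_i}$ to the summand $\iota(\boxtimes_I V_i^1)$ with $\otimes^*_I(\theta_{V_i})_1$. Then rerun the diagram chase from the proof of Proposition \ref{prop2} with this identification in place.

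For the first part, write $n=|I|$ and start from the definition
\[
\theta_{\otimes^*_IV_i}^{(\otimes_I\pi_i)}=\bigoplus_{K\rightarrow I}\theta_{V_1}^{(\pi_1)}\boxtimes\dots\boxtimes\theta_{V_n}^{(\pi_n)}.
\]
Specialize to $\pi_i:K_i\rightarrow J_i$ with $|J_i|=1$, so each factor $\theta_{V_i}^{(\pi_i)}$ is a component of $(\theta_{V_i})_1$. By Proposition \ref{prop3}, $(V_1^1)^\infty\otimes^*\dots\otimes^*(V_n^1)^\infty=(V_1^1\boxtimes\dots\boxtimes V_n^1)^\infty$. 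Its summands are indexed by $K\rightarrow I$ and have the form $\boxtimes_I(H^{\otimes|K_i|}\otimes_H V_i^1)=H^{\otimes|K|}\otimes_{H^{\otimes n}}(\boxtimes_IV_i^1)$, and these are exactly the summands of $\iota(\boxtimes_IV_i^1)$ coming from maps $K\rightarrow I$ into an $n$-element set. Matching summands one at a time, the restriction of $\theta_{\otimes^*_IV_i}$ equals $\otimes^*_I(\theta_{V_i})_1$, which we name $(\theta_{\otimes^*_IV_i})_n$. The only thing to check here is that the identification in Proposition \ref{prop3} is compatible with the $H^{\otimes|K|}$-actions, i.e.\ with the twist $\sigma$ in the definition of $\otimes^*$. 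Cocommutativity of $H$ makes the reordering of tensor factors $H$-linear, so this holds.

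For the second part, one direction is immediate: a morphism of interconnected modules commutes with every $\theta^{(\pi)}$, in particular with those in the restricted map. For the converse, assume $f\circ(\theta_{\otimes^*_IV_i})_n=(\theta_W)_1\circ\iota(f^n)$ and fix an arbitrary $\pi:J\rightarrow I'$ with $|I'|=n$. Copy the proof of Proposition \ref{prop2}, replacing the terminal maps $\pi_1:I\rightarrow\{1\}$ by maps into the fixed $n$-element index set, so that $\pi_1:I'\rightarrow I$ and $\pi_2=\pi_1\pi:J\rightarrow I$ have source blocks of size one in each $V_i$. Then the following hold in the three-rectangle diagram:
\begin{itemize}
\item the top rectangle and the outer circle commute by the hypothesis, since $\theta^{(\pi_1)}$ and $\theta^{(\pi_2)}$ are components of $(\theta_{\otimes^*_IV_i})_n$;
\item the middle rectangle commutes by functoriality of $\iota$;
\item the two semicircles commute by the composition axiom for $\theta$.
\end{itemize}
A diagram chase then gives the bottom rectangle, $f^{|J|}\circ\theta^{(\pi)}=\theta^{(\pi)}_W\circ(H^{\otimes|J|}\otimes f^{|I'|})$. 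The remaining components of $\otimes^*_IV_i$, those with larger tensor degree in a single factor, are handled the same way, by factoring through $\otimes_I\pi_i$ as in the remark following Proposition \ref{prop2}.

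The main obstacle is the first part, specifically checking that the rearrangement of tensor factors is $H$-linear; that is where cocommutativity is essential. The chase itself is formal.
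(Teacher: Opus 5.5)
Your first part is correct and is what the paper intends. The paper gives no separate argument for Proposition \ref{prop4} beyond calling it similar to Proposition \ref{prop2}. The identification of the restriction of $\theta_{\otimes^*_IV_i}$ with $\otimes^*_I(\theta_{V_i})_1$ is just the definition of $\theta_{\otimes^*}$ read through Proposition \ref{prop3}. (A small correction: the condition should read $(\theta_W)_n\circ\iota(f^n)$. Your top rectangle uses $\theta_W^{(\pi_1)}$ with $\pi_1$ landing in an $n$-element set, and that is a component of $(\theta_W)_n$.)

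The genuine gap is the diagram chase. Write $V:=\otimes^*_IV_i$ and $\Theta:=H^{\otimes|J|}\otimes_{H^{\otimes|I'|}}\theta^{(\pi_1)}_V$. The commuting faces only give
\[
f^{|J|}\circ\theta^{(\pi)}_V\circ\Theta=\theta^{(\pi)}_W\circ\bigl(H^{\otimes|J|}\otimes_{H^{\otimes|I'|}}f^{|I'|}\bigr)\circ\Theta ,
\]
that is, the bottom rectangle commutes only after precomposition with $\Theta$. To cancel $\Theta$ you need it to be an epimorphism. That requires every component $\theta_{V_i}^{(K_i\to\cdot)}:H^{\otimes|K_i|}\otimes_HV_i^1\to V_i^{|K_i|}$ of $(\theta_{V_i})_1$ to be surjective, including $K_i=\emptyset$. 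The hypothesis $V_i^1\neq0$ gives nothing of the sort. Your closing sentence about the ``remaining components'' fails for the same reason. The hypothesis controls $f$ only on the image of $\theta_V$ restricted to $\iota(\boxtimes_IV_i^1)$, and leaves $f$ unconstrained on a complement.

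This is not an oversight you can patch: the paper's proof of Proposition \ref{prop2} makes the same cancellation, and the equivalence is false as stated. Already for $|I|=1$, where the statement is Proposition \ref{prop2}, take $H={\bf k}[\partial]$ and $V=W=i(M)\oplus U$ with $M\neq0$. Let $U=(N^\infty,\text{id})$ for the $H^{\otimes2}$-module $N={\bf k}_1\otimes{\bf k}_1$, where $\partial$ acts by $1$ on ${\bf k}_1$. Then $U^0=U^1=0$, since $N$ has no coinvariants in either factor. But $\theta_U^{(\pi)}:H^{\otimes3}\otimes_{H^{\otimes2}}U^2\to U^3$ is nonzero for a surjection $\pi:\{1,2,3\}\to\{1,2\}$. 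Let $f=\text{id}_{i(M)}\oplus g$, with $g$ the identity on $U^2$ and $0$ in all other degrees. Since $(\theta_V)_1$ lands in $i(M)$ and $f^1=\text{id}_M$, the condition $f\circ(\theta_V)_1=(\theta_V)_1\circ\iota(f^1)$ holds. Yet $f$ is not a morphism, because $g^3\circ\theta_U^{(\pi)}=0\neq\theta_U^{(\pi)}$.

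What your argument does prove is the statement under the extra hypothesis that each $(\theta_{V_i})_1$ is surjective in every tensor degree. This holds for $V_i=i(V_i^1)$, the only case the paper uses afterwards. Then $\Theta$ is surjective by right exactness of $H^{\otimes|J|}\otimes_{H^{\otimes|I'|}}-$, and your chase closes.
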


\subsection{\texorpdfstring{Definition of interconnected $H^\infty$-algebras}{Definition of interconnected H infty algebras}}
\begin{definition}
    Suppose that $(V,\theta_V)$ is an interconnected $H^\infty$-module. An \textbf{interconnected $H^\infty$-algebra} is an interconnected $H^\infty$-module
    endowed with a product $*\in\text{Hom}_{H^\infty}(V\otimes^* V,V)$ commuting with interconnection rules.
\end{definition}
Let $(W,\theta_W)$ be another interconnected $H^\infty$-algebra. A \textbf{morphism of interconnected $H^\infty$-algebras} is an interconnected $H^\infty$-map
$f:(V,\theta_V)\rightarrow(W,\theta_W)$ which commutes with interconnected $H^\infty$-products, i.e., $f(v_1*_Vv_2)=f(v_1)*_Wf(v_2)$, for any $v_1,v_2\in V$.
We denote by \textbf{$H^\infty\text{-alg}_{\text{IC}}$} the category of interconnected $H^\infty$-algebras.

\subsection{\texorpdfstring{Interconnected $H^{\infty}$-algebras and $H$-pseudoalgebras}{Interconnected H infty algebras and H pseudoalgebras}}
The following proposition is a direct consequence of full faithness of the embedding functor $i:\mathcal{M}(H)\hookrightarrow H^\infty\text{-Mod}_{\text{IC}}$
and we want to explain it more explicitly here.
\begin{proposition}
    For any regular $H^\infty$-module $V^\infty$ from a $H$-module $V$, there is an isomorphism of vector spaces
    \[
    \text{Hom}_{H^\infty\text{-Mod}_{\text{IC}}}(V^\infty\otimes^*V^\infty,V^\infty)\cong\text{Hom}_{H^{\otimes2}}(V\boxtimes V,H^{\otimes2}\otimes_H V).
    \]
\end{proposition}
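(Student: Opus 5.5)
The approach is to interpret both sides inside $H^\infty\text{-Mod}_{\text{IC}}$, where $V^\infty$ carries the interconnection $\text{id}$ (that is, $i(V)=(V^\infty,\text{id})$), and to use Proposition \ref{prop3} and Proposition \ref{prop4} to reduce a morphism out of $V^\infty\otimes^*V^\infty$ to its lowest nontrivial component.

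First, by Proposition \ref{prop3} we have $V^\infty\otimes^*V^\infty=(V\boxtimes V)^\infty$ as $H^\infty$-modules. The interconnection $\theta_{\otimes^*}$ built from $\text{id}$ on each factor is again the identity rule on $(V\boxtimes V)^\infty$, in the sense of the embedding $\mathcal{M}(H^{\otimes 2})\hookrightarrow H^\infty\text{-Mod}_{\text{IC}}$, $V^2\mapsto((V^2)^\infty,\text{id})$. Thus the left side is $\text{Hom}_{H^\infty\text{-Mod}_{\text{IC}}}\big(((V\boxtimes V)^\infty,\text{id}),(V^\infty,\text{id})\big)$.

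Second, I apply Proposition \ref{prop4} with $n=2$. A morphism $f$ is an $H^\infty$-map satisfying $f\circ(\theta_{\otimes^*})_2=(\theta_{V^\infty})_1\circ\iota(f^2)$. With identity interconnections this says the following. The whole map $f$ is determined by its component
\[
f^2\colon V\boxtimes V=H^{\otimes2}\otimes_{H^{\otimes2}}(V\boxtimes V)\to (V^\infty)^2=H^{\otimes2}\otimes_H V,
\]
which is an $H^{\otimes2}$-map. On the piece $H^{\otimes|J|}\otimes_{H^{\otimes2}}(V\boxtimes V)$ indexed by $\pi\colon J\to\{1,2\}$, the map $f$ is forced to be $H^{\otimes|J|}\otimes_{H^{\otimes2}}f^2$, followed by the identification $H^{\otimes|J|}\otimes_{H^{\otimes2}}(H^{\otimes2}\otimes_HV)\cong H^{\otimes|J|}\otimes_HV$. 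Concretely this is the formula for $*_{(m,n)}$ from the lemma on pseudoproduct sequences, so the restriction map $f\mapsto f^2$ lands in $\text{Hom}_{H^{\otimes2}}(V\boxtimes V,H^{\otimes2}\otimes_HV)$.

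Conversely, given a pseudoproduct $*$, I define $f$ by the extension $*_\infty$ built before the Corollary. This is an $H^\infty$-map by the set isomorphism Proposition, using cocommutativity and the $\mathbb{S}_2$-action to handle arbitrary $J=J_1\sqcup J_2$. I then check that it commutes with the identity interconnections; this holds by construction, since it is defined by base change along each $\pi$. The two assignments are mutually inverse and linear, which gives the isomorphism. Equivalently, the statement is full faithfulness of $i$ extended to the tensor category, via $\text{Hom}(i(V\boxtimes V)\text{-type object},i(V))$.

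The main obstacle is the uniqueness step. I must show that an interconnected morphism has no freedom in tensor degrees other than those coming from $\pi\colon J\to\{1,2\}$. In particular its degree-$0$ and degree-$1$ components, and the pieces of $(V\boxtimes V)^\infty$ where $\pi$ is not surjective, must be forced by the compatibility relations; they must not carry independent data, and they must not be forced to vanish in a way that contradicts the pseudoproduct side. I would handle this by factoring each $\pi$ as a surjection followed by an injection, and applying the compatibility $\theta^{(\pi_1\pi_2)}=\theta^{(\pi_1)}(H\otimes\theta^{(\pi_2)})$ exactly as in the diagram chase of Proposition \ref{prop2}. The convention $\Delta^{-1}=\varepsilon$ matches the injective (non-surjective) cases, where tensoring over $H^{\otimes|I|}$ with ${\bf k}$-factors uses the counit.
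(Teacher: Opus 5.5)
Your proposal is correct and follows essentially the paper's own argument. Proposition \ref{prop3} identifies $V^\infty\otimes^*V^\infty$ with $(V\boxtimes V)^\infty$; Proposition \ref{prop4}, together with the identity interconnections on $(V^\infty,\text{id})$, then forces any morphism to equal $\iota(f^2)$, and conversely $\iota$ of any $H^{\otimes2}$-map is a morphism. You spell out the converse and the degree-$0$, degree-$1$ and non-surjective pieces more explicitly than the paper's diagram does. The surjection--injection factoring you plan for uniqueness is not needed, since each piece of $(V\boxtimes V)^\infty$ is directly the image of $V\boxtimes V$ under $\theta^{(\pi)}$ for some $\pi\colon J\to\{1,2\}$.
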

\begin{proof}
    By proposition \ref{prop3} and proposition \ref{prop4}, we have the following commutative diagram.
    \begin{center}
    \begin{tikzpicture}
        \node(A) at (0,0) {$V\boxtimes V$};
        \node(B) at (0,-1.5) {$H^{\otimes2}\otimes_HV$};
        \node(C) at (3,0) {$\substack{(V\boxtimes V)^\infty\\=V^\infty\otimes^*V^\infty}$};
        \node(D) at (3,-1.5) {$V^\infty$};
        \node(E) at (6,0) {$V^\infty\otimes^* V^\infty$};
        \node(F) at (6,-1.5) {$V^\infty$};
        \draw[->] (A) -- node[left]  {$*^2$} (B);
        \draw[->] (A) -- node[above]  {$\iota$} (C);
        \draw[->] (B) -- node[above]  {$\iota$} (D);
        \draw[->] (C) -- node[right]  {$\iota(*^2)$} (D);
        \draw[->] (C) -- node[above]  {id} (E);
        \draw[->] (D) -- node[above]  {id} (F);
        \draw[->] (E) -- node[right]  {$*$} (F);
    \end{tikzpicture}
\end{center}
By the functority of $\iota$, we have
\[
\iota(*^2)\in\iota(\text{Hom}_{H^{\otimes2}}(V\boxtimes V,H^{\otimes2}\otimes_H V))\cong\text{Hom}_{H^{\otimes2}}(V\boxtimes V,H^{\otimes2}\otimes_H V).
\]
Since the interconnection rules are identity map, we complete the proof.
\end{proof}

\subsection{Relationship with pseudotensor category}
Denote by $\mathcal{S}$ the category of finite non-empty sets and surjective maps.
For a morphism $\pi:J\twoheadrightarrow I$ in $\mathcal{S}$ and $i\in I$ set $J_i:=\pi^{-1}(i)\subset J$; let $\cdot\in\mathcal{S}$ be the one element set.
\begin{definition}\label{def1}
    A \textbf{pseudotensor category} is a class of objects $\mathcal{M}$ together with the following datum:\\
    (a) For any $I\in\mathcal{S}$, an $I$-family of objects $L_i\in\mathcal{M},i\in I$, and an object $M\in\mathcal{M}$,
    one has $I$-operations: $P_I^{\mathcal{M}}(\{L_i\},M)=P_I(\{L_i\},M)$. There is a natural symmetric group $\mathbb{S}_I$-action on $P_I$ by permuting the inputs.\\
    (b) For any surjective map $\pi:J\twoheadrightarrow I$, families of objects $\{L_i\}_{i\in I},\{K_j\}_{j\in J}$, and an object $M$,
    one has the composition map
    \begin{equation}\label{eq2}
        P_I(\{L_i\},M)\times\prod_IP_{J_i}(\{K_j\},L_i)\rightarrow P_J(\{K_j\},M),\quad(\varphi,(\psi_i))\mapsto\varphi(\psi_i).
    \end{equation}
    The following properties should hold for the compositions:\\
    \textbf{Associativity}: If $H\twoheadrightarrow J$ is another surjective map, $\{F_h\}$ an $H$-family of objects, $\chi_j\in P_{H_j}(\{F_h\},K_j)$,
    then $\varphi(\psi_i(\chi_j))=(\varphi(\psi_i))(\chi_j)\in P_H(\{F_h\},M)$.\\
    \textbf{Unit}: For any $M\in\mathcal{M}$ there is an element $\text{id}_M\in P_\cdot(\{M\},M)$ such that for any $\varphi\in P_I(\{L_i\},M)$
    one has $\text{id}_M(\varphi)=\varphi(\text{id}_{L_i})=\varphi$.\\
    \textbf{Equivariance}: The compositions \ref{eq2} are equivariance with respect to the natural action of the symmetric group.
\end{definition}
\begin{remark}
    The notion of pseudotensor category is a straightforward generalization of the notion of operad. By definition, an operad is a pseudotensor category with only one object.
    Pseudotensor categories were considered sporadically and under various names;
    see \cite{La} ("multicategories") or \cite{Li,B2} ("multilinear categories") or \cite{Y} (“colored operads”).
\end{remark}
If in definition \ref{def1} we replace $\mathcal{S}$ by $\hat{\mathcal{S}}$, then we get the definition of \textbf{augmented pseudotensor category}.
Let $\mathcal{M}$ be any pseudotensor category. Suppose we have a functor $h:\mathcal{M}\rightarrow\text{Sets}$,
and for any finite set $I$ with $|I|\geq2$ and $i_0\in I$ we have natural maps
\[
h_{I,i_0}:P_I(\{L_i\},M)\times h(L_{i_0})\rightarrow P_{I\backslash\{i_0\}}(\{L_i\},M),
\]
where $\{L_i\}$ is an $I$-family of objects and $M\in\mathcal{M}$. We call these data an \textbf{augmentation functor} if the following compatibilities hold:\\
(a) Let $J$ be a finite set with $|J|\geq2$, $\pi:J\rightarrow I$ a surjective map, $j_i\in J$.
Then for $\varphi\in P_I(\{L_i\},M),\psi_i\in P_{J_i}(\{K_j\},L_i),a\in h(K_{j_0})$ one has
\[
h_{J,j_0}(\varphi(\psi_i),a)=\varphi(\psi_{i^\prime},h_{J_{i_0},j_0}(\psi_{i_0},a))
\]
(here $i_0=\pi(j_0),i^\prime\in I\backslash\{i_0\}$) if $|J_{i_0}|\geq2$, and
\[
h_{J,j_0}(\varphi(\psi_i),a)=h_{I,i_0}(\varphi,\psi_{i_0}(a))(\psi_{i^\prime})
\]
if $|J_{i_0}|=1$.\\
(b) Assume that $|I|\geq2$; let $i_0,i_1\in I$ be two distinct elements. Then for $\varphi\in P_I(\{L_i\},M),a_{i_0}\in h(L_{i_0}),a_{i_1}\in h(L_{i_1})$ one has
\[
h_{I\backslash\{i_0\},i_1}(h_{I,i_0}(\varphi,a_0),a_1)=h_{I\backslash\{i_1\},i_0}(h_{I,i_1}(\varphi,a_1),a_0)\in P_{I\backslash\{i_0,i_1\}}(\{L_i\},M)
\]
if $|I|>2$, and
\[
h_{I,i_0}(\varphi,a_0)a_1=h_{I,i_1}(\varphi,a_1)a_0\in h(M)
\]
if $|I|=2$.\\
One may consider an augmented pseudotensor category as a usual pseudotensor category equipped with an augmentation functor.

In the book \cite{BD} written by A. Beilinson and V. Drinfeld, it is proved that any pseudotensor category $\mathcal{M}$ can be realized as a full pseudotensor subcategory
of a tensor category $\mathcal{M}^\otimes$. They give a universal construction of such an embedding $\mathcal{M}\hookrightarrow\mathcal{M}^\otimes$ as follows.
Define an object of $\mathcal{M}^\otimes$ by a collection $\{M_i\}_{i\in I}$ of objects of $\mathcal{M}$ labeled by some finite non-empty set $I$,
denoted by $\otimes M_i=\otimes_IM_i$. Define a morphism of $\varphi:\otimes_JN_j\rightarrow\otimes_IM_i$ by a collection $(\pi,\{\varphi_i\}_{i\in I})$
where $\pi:J\twoheadrightarrow I$ and $\varphi_i\in P_{J_i}(\{N_j\},M_i)$, denoted by $\varphi=\otimes\varphi_i=\otimes_\pi\varphi_i$.

Let $X$ be a smooth curve and $\mathcal{M}(X)$ be the category of right $\mathcal{D}$-modules on $X$
($:=$ sheaves of $\mathcal{D}_X$-modules quasi-coherent as $\mathcal{O}_X$-modules). The category $\mathcal{M}(X)$ has a pseudotensor structure by setting
$P_I^*(\{L_i\},M):=\text{Hom}(\boxtimes L_i,\Delta^{(I)}_*M)$. In this case, they embed $\mathcal{M}(X)$ in a larger abelian category $\mathcal{M}(X^{\mathcal{S}})$
so that the pseudotensor structure on $\mathcal{M}(X)$ are induced from the tensor structure on $\mathcal{M}(X^{\mathcal{S}})$,
which is a specific construction of $\mathcal{M}^\otimes$; see \S 3.4.10 of \cite{BD} for further details.
\begin{remark}\label{rem3}
    We explain here that our $H^\infty\text{-Mod}_{\text{IC}}$ is actually a generalization of $\mathcal{M}(X^\mathcal{S})$.
    Given a cocommutative Hopf algebra $H$ over a field ${\bf k}$ and let $A$ be a commutative associative $H$-differential algebra, i.e.,
    a left $H$-module with a commutative associative multiplication; for example, $A=H^*:=\text{Hom}_{\bf k}(H,{\bf k})$.
    Then the algebra of differential operators on the affine scheme $X:=\text{Spec}(A)$ is $\mathcal{D}(X)=A\sharp H$, where $\sharp$ means smash product.
    Since $H$ is a Hopf algebra, $G:=\text{Spec}(H)$ has an algebraic group structure. The $G$-action on $X$ is given by $H$-action on $A$.
    Then the category of $G$-equivariant $\mathcal{D}$-modules on $X$ is equivalent to the category of $H$-modules.
    In the above setting, when $H={\bf k}[\partial]$ the pseudotensor category $\mathcal{M}^G(X)$(here superscript $G$ means $G$-equivariant) is our pseudotensor category $\mathcal{M}(H)$,
    and the tensor category $\mathcal{M}^G(X^{\mathcal{S}})$ is a tensor subcategory of our tensor category $H^\infty\text{-Mod}_{\text{IC}}$ whose objects are
    $(V,\theta_V)$ with $V^0=0$ and $\theta^{(\pi)}_V=0$ for any $\pi\in\hat{\mathcal{S}}\backslash\mathcal{S}$.
    The fully faithful pseudotensor embedding functor $\Delta_*^{(\mathcal{S})}:\mathcal{M}^G(X)\hookrightarrow\mathcal{M}^G(X^{\mathcal{S}})$
    is our functor $i:\mathcal{M}(H)\hookrightarrow H^\infty\text{-Mod}_{\text{IC}}$.
\end{remark}

\section{Operadic methods in the tensor categories}
Let $\mathscr{P}$ be a symmetric operad over ${\bf k}$, $\mathcal{M}$ be a pseudotensor ${\bf k}$-category,
and $F:\mathscr{P}\rightarrow\mathcal{M}$ be a pseudotensor functor, where $\mathscr{P}$ can be regarded as a pseudotensor category with single object $\cdot$.
For any object $V\in\mathcal{M}$, we denote by $\text{End}_V:=P^\mathcal{M}(\{V\},V)$ the endomorphism operad over $V$.
Then there is a morphism of operad $\mathscr{P}\rightarrow\text{End}_{F(\cdot)}$ induced from $F$, also denoted by $F$.
We know that a morphism $F$ is equivalent to a $\mathscr{P}$-pseudoalgebra structure on $F(\cdot)$.

Since there are no (pseudo)tensor products on pseudotensor category when the pseudotensor structure is not representable,
we can not define a $\mathscr{P}$-pseudoalgebra structure by Schur functor.
Therefore, many useful construction in representation theory cannot be derived naturally by operadic methods; for example, free object and coalgebra structure.
As a result, in order to using operadic methods, we prefer a tensor category structure and we need to embed a pseudotensor category into a proper tensor category.

Let $H$ be a Hopf algebra over a field ${\bf k}$.
From the discussion in section \ref{sec1}, we have two paths embedding the pseudotensor category $\mathcal{M}(H)$ to a proper tensor category.
The first one is
\[
\iota:\mathcal{M}(H)\hookrightarrow H^\infty\text{-Mod}\hookrightarrow H^\infty\text{-Mod}_{\text{IC}},\quad V\mapsto V^\infty\mapsto(V^\infty,\delta),
\]
and the second one is
\[
i:\mathcal{M}(H)\hookrightarrow H^\infty\text{-Mod}_{\text{IC}},\quad V\mapsto(V^\infty,\text{id}).
\]

Let $V$ be an $H^\infty$-module. We can define a $\mathscr{P}\ H^\infty$-algebra(resp. $\mathscr{P}$ interconnected $H^\infty$-algebra) structure on $V$
as an operadic morphism $\mathscr{P}\rightarrow\text{End}_V$,
where $\text{End}_V(n):=\text{Hom}_{H^\infty}(V^{\otimes^* n},V)$(resp. $\text{Hom}_{H^\infty\text{-Mod}_{\text{IC}}}(V^{\otimes^* n},V)$).
Furthermore, we have the category of $\mathscr{P}\ H^\infty$-algebra(resp. $\mathscr{P}$ interconnected $H^\infty$-algebra),
denoted by $\mathscr{P}\ H^\infty\text{-alg}$(resp. $\mathscr{P}\ H^\infty\text{-alg}_{\text{IC}}$).

In these two categories, we can naturally consider many construction by operadic methods, such as (co)homology, (co)bar construction and Koszul duality.
We refer the readers to this book \cite{LV} written by J.L. Loday and B. Vallette for more details.

\subsection{Schur functor}
In this subsection, we will give the definition of Schur functor in the tensor category $H^\infty\text{-Mod}_{\text{IC}}$.

Let $M$ be any $\mathbb{S}$-module. The \textbf{Schur functor} in $H^\infty\text{-Mod}_{\text{IC}}$ associated to $M$ is defined by
\[
\widetilde{M}:H^\infty\text{-Mod}_\text{IC}\rightarrow H^\infty\text{-Mod}_{\text{IC}}\quad V\mapsto\bigoplus_{n\geq0}M(n)\otimes_{\mathbb{S}_n}V^{\otimes^*n}.
\]
Here $V^{\otimes^*n}$ is viewed as a left $\mathbb{S}_n$-module under the left action
\[
\sigma\cdot(v_1,\dots,v_n):=(v_{\sigma^{-1}(1)},\dots,v_{\sigma^{-1}(n)}),
\]
where $v_1,\dots,v_n\in V$.
Any morphism of $\mathbb{S}$-modules $\alpha:M\rightarrow N$ gives rise to a transformation of functors $\widetilde{\alpha}:\widetilde{M}\rightarrow\widetilde{N}$.
If the $\mathbb{S}$-module $M$ is concentrated in arity $0$(resp. $1$, resp. $n$),
then the functor $\widetilde{M}$ is constant(resp. linear, resp. homogeneous polynomial of degree $n$).
We can get the \textbf{identity functor}, denoted by $\widetilde{I}$, by taking the Schur functor of $I:=(0,{\bf k},0,0,\dots)$,
so $\widetilde{I}(V)=\text{Id}_{H^\infty\text{-Mod}_{\text{IC}}}(V)=V$.

There are three important constructions on endofunctors of $H^\infty\text{-Mod}_{\text{IC}}$: the direct sum, the tensor product and the composition,
which are given by
\[
(F\oplus G)(V):=F(V)\oplus G(V),
\]
\[
(F\otimes G)(V):=F(V)\otimes G(V),
\]
\[
(F\circ G)(V):=F(G(V)).
\]
It is not difficult to show that if the functors $F$ and $G$ are Schur functors, then the resulting functor is also a Schur functor.

\subsection{Free object}
We will give here the construction of free object in the category $\mathscr{P}\ H^\infty\text{-alg}_{\text{IC}}$.

Let $\mathscr{U}:\mathcal{C}\rightarrow\mathcal{D}$ be a functor such that the objects of $\mathcal{D}$ are the same as the objects of $\mathcal{C}$ except that
we do not take into account some of the data. Such a functor is called a \textbf{forgetful functor}.
We often denote by $C$ instead of $\mathscr{U}(C)$ for the image of an object $C$ by a forgetful functor.
Let $\mathscr{F}:\mathcal{D}\rightarrow\mathcal{C}$ be a functor left adjoint to $\mathscr{U}$, i.e.,
\[
\text{Hom}_\mathcal{C}(\mathscr{F}(D),C)\cong\text{Hom}_\mathcal{D}(D,\mathscr{U}(C)).
\]
The image of an object $D$ of $\mathcal{D}$ by $\mathscr{F}$ is called a \textbf{free object}.
The free object $\mathscr{F}(D)\in\mathcal{C}$ over the object $D\in\mathcal{D}$ is characterized by the following property:

For any object $C\in\mathcal{C}$ and any morphism $f:D\rightarrow\mathscr{U}(C)$ in $\mathcal{D}$,
there exists a unique morphism $\widetilde{f}:\mathscr{F}(D)\rightarrow C$ in $\mathcal{C}$, i.e., the following diagram commutative:
\begin{center}
    \begin{tikzpicture}
        \node(A) at (0,0) {$D$};
        \node(B) at (2.5,0) {$\mathscr{F}(D)$};
        \node(C) at (2.5,-1.5) {$C$};
        \draw[->] (A) -- node[above]  {$\eta$} (B);
        \draw[->] (B) -- node[right]  {$\widetilde{f}$} (C);
        \draw[->] (A) -- node[below]  {$f$} (C);
    \end{tikzpicture}
\end{center}
Observe that a free object $\mathscr{F}(D)$ is unique up to a unique isomorphism.

Now we consider the free $\mathscr{P}$ interconnected $H^\infty$-algebra. Let $(V,\theta_V)$ be an interconnected $H^\infty$-module.
Then $\mathscr{P}(V)$ is the image of $V$ by the Schur functor $\mathscr{P}$.
Let $\gamma:\mathscr{P}\circ\mathscr{P}\rightarrow\mathscr{P}$ and $\eta:I\rightarrow\mathscr{P}$ be the composition map and unit map in the definition of operad $\mathscr{P}$.
The defining axioms of the operad $\mathscr{P}$ show that $(\mathscr{P}(V),\gamma(V):\mathscr{P}(\mathscr{P}(V))\rightarrow\mathscr{P}(V))$
is a $\mathscr{P}$ interconnected $H^\infty$-algebra.
\begin{proposition}
    The $\mathscr{P}$ interconnected $H^\infty$-algebra $(\mathscr{P}(V),\gamma(V))$ equipped with $\eta(V):V\rightarrow\mathscr{P}(V)$
    is the free $\mathscr{P}$ interconnected $H^\infty$-algebra over $V$.
\end{proposition}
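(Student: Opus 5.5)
We follow the standard argument of \cite{LV} (Prop. 5.2.6), checking that every step stays inside $H^\infty\text{-Mod}_{\text{IC}}$. Given a $\mathscr{P}$ interconnected $H^\infty$-algebra $(A,\gamma_A:\mathscr{P}(A)\rightarrow A)$ and a morphism $f:V\rightarrow A$ of interconnected $H^\infty$-modules, we define
\[
\widetilde{f}:=\gamma_A\circ\mathscr{P}(f):\mathscr{P}(V)\rightarrow\mathscr{P}(A)\rightarrow A.
\]
Here $\mathscr{P}(f)=\bigoplus_n \text{id}_{\mathscr{P}(n)}\otimes_{\mathbb{S}_n}f^{\otimes^*n}$ is the Schur functor applied to $f$. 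We then verify three things. First, $\widetilde f$ is a morphism in $H^\infty\text{-Mod}_{\text{IC}}$. Second, $\widetilde f\circ\eta(V)=f$. Third, $\widetilde f$ is a morphism of $\mathscr{P}$-algebras, and any such morphism extending $f$ equals $\widetilde f$.

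For the first point, we show that $f^{\otimes^*n}$ is a morphism of interconnected $H^\infty$-modules whenever $f$ is. This holds because $\theta_{\otimes^*_IV_i}$ is defined componentwise as $\bigoplus_{K\rightarrow I}\theta^{(\pi_1)}_{V_1}\boxtimes\dots\boxtimes\theta^{(\pi_{|I|})}_{V_{|I|}}$, and the same is true for $\iota(f^{\otimes^* n})$. It also commutes with the $\mathbb{S}_n$-action, since the interconnection on $\otimes^*$ is symmetric. Tensoring with $\mathscr{P}(n)$ over $\mathbb{S}_n$ and summing preserves these properties, because $H^\infty\text{-Mod}_{\text{IC}}$ is a ${\bf k}$-linear abelian category and all structure maps act on the $V$-factors only. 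Alternatively, Proposition \ref{prop4} reduces the check to the components $(\theta)_n$. Finally, $\gamma_A$ is a morphism in $H^\infty\text{-Mod}_{\text{IC}}$ by the definition of $\mathscr{P}$ interconnected $H^\infty$-algebra, since $\gamma_A$ is assembled from elements of $\text{End}_A(n)=\text{Hom}_{H^\infty\text{-Mod}_{\text{IC}}}(A^{\otimes^*n},A)$.

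The second point follows from the unit axiom for $A$ together with naturality of $\eta$: we have $\gamma_A\circ\mathscr{P}(f)\circ\eta(V)=\gamma_A\circ\eta(A)\circ f=f$. For the algebra-morphism property, we use naturality of $\gamma$ and the associativity diagram of $A$:
\[
\widetilde f\circ\gamma(V)=\gamma_A\circ\mathscr{P}(f)\circ\gamma(V)=\gamma_A\circ\gamma(A)\circ(\mathscr{P}\circ\mathscr{P})(f)=\gamma_A\circ\mathscr{P}(\gamma_A)\circ\mathscr{P}(\mathscr{P}(f))=\gamma_A\circ\mathscr{P}(\widetilde f).
\]
For uniqueness, let $g$ be a $\mathscr{P}$-algebra morphism with $g\circ\eta(V)=f$. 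The unit axiom for $\mathscr{P}(V)$ gives $\gamma(V)\circ\mathscr{P}(\eta(V))=\text{id}_{\mathscr{P}(V)}$. Hence
\[
g=g\circ\gamma(V)\circ\mathscr{P}(\eta(V))=\gamma_A\circ\mathscr{P}(g)\circ\mathscr{P}(\eta(V))=\gamma_A\circ\mathscr{P}(f)=\widetilde f.
\]

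The main obstacle is the first step: making precise that the Schur functor is a well-defined endofunctor of $H^\infty\text{-Mod}_{\text{IC}}$. This requires checking that $\otimes^*$ is functorial on interconnected morphisms and that the $\mathbb{S}_n$-coinvariants of an interconnected module inherit an interconnection rule compatible with composition of the $\pi$'s. We would handle this by noting that $\mathbb{S}_n$ acts through automorphisms in $H^\infty\text{-Mod}_{\text{IC}}$, so the coinvariants are a cokernel in this abelian category. The compatibility $\theta^{(\pi_1\pi_2)}=\theta^{(\pi_1)}(H^{\otimes|K|}\otimes\theta^{(\pi_2)})$ then descends from $V^{\otimes^*n}$. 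The same observation shows that $\mathscr{P}(V)$ with $\gamma(V)$ is indeed an object of $\mathscr{P}\ H^\infty\text{-alg}_{\text{IC}}$, as asserted before the proposition.
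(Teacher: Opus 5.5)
Your proposal is correct and follows the same route as the paper: set $\widetilde f=\gamma_A\circ\mathscr{P}(f)$, obtain $\widetilde f\circ\eta(V)=f$ from naturality of $\eta$ and the unit axiom, and obtain the algebra-morphism property from naturality of $\gamma$ and the associativity axiom. Two parts of your write-up go beyond the paper's proof. Your uniqueness argument via $\gamma(V)\circ\mathscr{P}(\eta(V))=\text{id}_{\mathscr{P}(V)}$ makes explicit what the paper only asserts, and you correctly require $f$ to be a morphism of interconnected modules rather than the bare $H^\infty$-map the paper writes.
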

\begin{proof}
    For any $H^\infty$-map $f:V\rightarrow A$, where $A$ is a $\mathscr{P}$ interconnected $H^\infty$-algebra, we consider the composition
    $\widetilde{f}:\mathscr{P}(V)\xrightarrow{\mathscr{P}(f)}\mathscr{P}(A)\xrightarrow{\gamma_A} A$.
    It extends $f$ since the composite
    \[
        V\xrightarrow{\eta(V)}\mathscr{P}(V)\xrightarrow{\mathscr{P}(f)}\mathscr{P}(A)\xrightarrow{\gamma_A} A
    \]
    is $f$ by $\mathscr{P}(f)\circ\eta(V)=\eta(A)\circ f$ and $\gamma_{A}\circ\eta(A)=I_A$.

    The following diagram is commutative by functoriality and the fact that $A$ is a $\mathscr{P}$ interconnected $H^\infty$-algebra:
    \begin{center}
        \begin{tikzpicture}
            \node(A) at (0,0) {$\mathscr{P}(\mathscr{P}(V))$};
            \node(B) at (3,0) {$\mathscr{P}(V)$};
            \node(C) at (0,-1.5) {$\mathscr{P}(\mathscr{P}(A))$};
            \node(D) at (3,-1.5) {$\mathscr{P}(A)$};
            \node(E) at (0,-3) {$\mathscr{P}(A)$};
            \node(F) at (3,-3) {$A$};
            \draw[->] (A) -- (B);
            \draw[->] (A) -- (C);
            \draw[->] (B) -- (D);
            \draw[->] (C) -- (D);
            \draw[->] (C) -- (E);
            \draw[->] (E) -- (F);
            \draw[->] (D) -- (F);
            \draw[->][bend left=60] (B.east) to node[right]{$\widetilde{f}$} (F.east);
        \end{tikzpicture}
    \end{center}
    It implies that the $H^\infty$-map $\widetilde{f}$ is a morphism of $\mathscr{P}$ interconnected $H^\infty$-algebras.

    Let us show that the $H^\infty$-map $\widetilde{f}$ is unique. Since we want $\widetilde{f}$ to coincide with $f$ on $V$ and we want $\widetilde{f}$ to be
    a morphism of $\mathscr{P}$ interconnected $H^\infty$-algebras, there is no other choice by $\widetilde{f}$.
\end{proof}
\begin{remark}
    The possible free obejct over a left $H$-module $V$ in the category of $\mathscr{P}\ H$-pseudoalgebras
    must be a subobject of the free object $\mathscr{P}(V^\infty)$ over the regular $H^\infty$-module $V^\infty$
    in the category of $\mathscr{P}$ interconnected $H^\infty$-algebras.
\end{remark}


\begin{thebibliography}{99}

    \footnotesize\itemsep=0pt

    \bibitem{BDK}
    \textsc{B. Bakalov, A. D'Andrea, V. Kac}:
    Theory of finite pseudoalgebras.
    \emph{Adv. Math.}
    Vol.162 (2001) no.1 1--140.

    \bibitem{BDK1}
    \textsc{B. Bakalov, A. D' Andrea, V. Kac}:
    Irreducible modules over finite simple Lie pseudoalgebras I. Primitive pseudoalgebras of type W and S.
    \emph{Adv. Math.}
    Vol.204 (2006) 278--346.

    \bibitem{BDK2}
    \textsc{B. Bakalov, A. D' Andrea, V. Kac}:
    Irreducible modules over finite simple Lie pseudoalgebras II. Primitive pseudoalgebras of type K.
    \emph{Adv. Math.}
    Vol.232 (2013) 188--237.

    \bibitem{BDK3}
    \textsc{B. Bakalov, A. D' Andrea, V. Kac}:
    Irreducible modules over finite simple Lie pseudoalgebras III. Primitive pseudoalgebras of type H.
    \emph{Adv. Math.}
    Vol.392 (2021) 0001--8708.

    \bibitem{Ba}
    \textsc{B. Bakalov et al.}:
    An operadic approach to vertex algebra and Poisson vertex algebra cohomology.
    \emph{Jpn. J. Math.}
    Vol.14 (2019), no.2, 249--342.

    \bibitem{Ba2}
    \textsc{B. Bakalov et al.}:
    Chiral versus classical operad.
    \emph{Int. Math. Res. Not.}
    Vol.2020, no.19, 6463--6488.

    \bibitem{BD}
    \textsc{A. Beilinson, V. Drinfeld}:
    Chiral algebras.
    American Mathematical Society, 2004.

    \bibitem{Borcherds}
    \textsc{R. Borcherds}:
    Vertex algebras, Kac-Moody algebras, and the Monster.
    \emph{Proc. Nat. Acad. Sci. U.S.A.}
    Vol.83 (1986) no.10 3068-3071.
    
    \bibitem{B2}
    \textsc{R. Borcherds}:
    Vertex algebras, Topological field theory, primitive forms and related topics(M. Kashiwara, A. Matsuo, K. Saito, and I. Satake, eds.).
    Birkh\"auser, Boston, 1998, pp. 35-77.

    \bibitem{BSW}
    \textsc{M. Benini, A. Schenkel and L. Woike}:
    Operads for algebraic quantum field theory.
    \emph{Commun. Contemp. Math.}
    Vol.23 (2021), no.2, No. 2050007, 39 pp.

    \bibitem{FG}
    \textsc{J.~N.~K. Francis and D. Gaitsgory}:
    Chiral Koszul duality.
    \emph{Selecta Math. (N.S.)}
    Vol.18 (2012), no.1, 27--87.

    \bibitem{Kac}
    \textsc{V. Kac}:
    Vertex algebras for beginners.
    Univ. Lecture Ser., 10
    American Mathematical Society, Providence, RI, 1997, viii+141 pp.
    ISBN: 0-8218-0643-2

    \bibitem{Kolesnikov}
    \textsc{P. Kolesnikov}:
    Simple finite Jordan pseudoalgebras.
    \emph{SIGMA Symmetry Integrability Geom. Methods Appl.}
    Vol.5 (2009) Paper 014, 17.

    \bibitem{La}
    \textsc{J. Lambek}:
    Deductive systems and categories.
    Lecture Notes in Math., vol.86
    Springer-Verlag, Berlin-New York-Heidelberg, 1969, pp. 76-122.

    \bibitem{Li}
    \textsc{F. E. J. Linton}:
    The multilinear Yoneda lemmas: Toccata, fugue, and fantasia on themes by Eilenberg-Kelly and Yoneda.
    Lecture Notes in Math., vol.195
    Springer-Verlag, Berlin-New York-Heidelberg, 1971, pp. 209-229.

    \bibitem{LV}
    \textsc{J.L. Loday, B. Vallette}:
    Algebraic operads.
    Grundlehren Math. Wiss., 346[Fundamental Principles of Mathematical Sciences]
    Springer, Heidelberg, 2012, xxiv+634 pp.
    ISBN: 978-3-642-30361-6
    
    \bibitem{M}
    \textsc{J.P. May}:
    The geometry of iterated loop spaces.
    Springer-Verlag, Berlin, 1972, Lectures Notes in Mathematics, Vol.271.

    \bibitem{NY}
    \textsc{Y. Nishinaka and S. Yanagida}:
    Algebraic operads of SUSY vertex algebras and SUSY Poisson vertex algebras.
    \emph{Adv. Math.}
    Vol.483 (2025) No.110671, 115 pp.

    \bibitem{RA}
    \textsc{A.Retakh}:
    Unital associative pseudoalgebras and their representations.
    \emph{J. Algebra}
    Vol.277 {2004} no.2 769--805.

    \bibitem{Wu1}
    \textsc{Wu Zhixiang}:
    Leibniz {$H$}-pseudoalgebras.
    \emph{J. Algebra}
    Vol.437 (2015) 1--33.

    \bibitem{Wu2}
    \textsc{Wu Zhixiang}:
    Graded left symmetric pseudo-algebras.
    \emph{Comm. Algebra}
    Vol.43 (2015) no.9 3869--3897.

    \bibitem{Y}
    \textsc{D. Yau}:
    Colored operads.
    Graduate Studies in Mathematics, 170,
    Amer. Math. Soc., Providence, RI, 2016

    \bibitem{YR}
    \textsc{Yao Rui, Wu Zhixiang}:
    Some results in Zinbiel {$H$}-pseudoalgebras.
    \emph{J. Algebra Appl.}
    Vol.24 (2025) no.11 2550256.


\end{thebibliography}
\end{document}